\newcommand{\proj}{\mathrm{proj}}
\DeclareMathOperator{\diam}{diam}
\DeclarePairedDelimiter{\ceil}{\lceil}{\rceil}
\theoremstyle{plain}
\newtheorem{theorem}{Theorem}[section]
\newtheorem{lemma}[theorem]{Lemma}
\newtheorem{corollary}[theorem]{Corollary}
\theoremstyle{definition}
\newtheorem{definition}[theorem]{Definition}
\newtheorem*{remarks*}{Remarks}
\newtheorem{claim}[theorem]{Claim}
\theoremstyle{definition}
\newtheorem{remark}[theorem]{Remark}
\newtheorem*{remark*}{Remark}
\numberwithin{equation}{section}
\begin{document}

\title{Large Sets with Small Injective Projections}

\author[Coen]{Frank Coen}
\address{Department of Mathematics \& Statistics, Villanova University,
800 Lancaster Ave, Villanova, PA 19085, U.S.A.}
\email{fcoen@villanova.edu}

\author[Gillman]{Nate Gillman}
\address{Department of Mathematics, Brown University, Providence, Rhode Island 02912, U.S.A.}
\email{ngillman@brown.edu}

\author[Keleti]{Tam\'as Keleti}
\address{Institute of Mathematics, E\"otv\"os Lor\'and University, P\'azm\'any P\'eter S\'et\'any 1/c, H-1117 Budapest, Hungary}
\email{tamas.keleti@gmail.com}
\thanks{The third author was supported by the Hungarian National Research, Development ad Innovation Office - NKFIH, 124749 and 129335.}

\author[King]{Dylan King}
\address{Department of Mathematics \& Statistics, Wake Forest University, Winston-salem, NC 27109, U.S.A.}
\email{kingda16@wfu.edu}

\author[Zhu]{Jennifer Zhu}
\address{Department of Mathematics, University of California, Berkeley, Berkeley, CA 94720, U.S.A.}
\email{jzhu42@gmail.com}

\date {\today}

\begin{abstract}
Let $\ell_1,\ell_2,\dots$ be a countable collection of lines in $\R^d$.
For any $t \in [0,1]$ we construct a compact set $\Gamma\sub\R^d$ with Hausdorff dimension $d-1+t$ which projects injectively into each $\ell_i$, such that the image of each projection has dimension $t$. This immediately implies the existence of homeomorphisms between certain Cantor-type sets whose graphs have large dimensions.
As an application, we construct a collection $E$ of disjoint, non-parallel $k$-planes in $\R^d$, for $d \geq k+2$,
whose union is a small subset of $\mathbb{R}^d$, either in Hausdorff dimension or Lebesgue measure, while $E$ itself has large dimension. As a second application, for any countable collection of vertical lines $w_i$ in the plane we construct a 
collection of nonvertical lines $H$, so that 
$F$, the union of lines in $H$,
has positive Lebesgue measure, but each point of each line $w_i$ is contained in at most one $h\in H$ and, for each $w_i$, 
the Hausdorff dimension of $F\cap w_i$ is zero.
\end{abstract}
\maketitle


\section{Introduction and statement of results}

Weierstrass famously constructed a function which is everywhere continuous but nowhere differentiable.
The so-called \ita{Weierstrass function} is defined in his original 1872 paper \cite{KW} as the following Fourier series,
\[f(x)=\sum_{n\geq 0}a^n\cos(b^n\pi x),\]
where $0<a<1$, $b$ is a positive odd integer, and $ab>1+3\pi/2$. 
We know now that the graph of the Weierstrass function has Hausdorff dimension greater than one, which provides some explanation for this pathological function's dearth of differentiability: in particular, one can easily show that differentiable functions have graphs 
of Hausdorff dimension 1.
It is also well known that that there exist 
continuous functions $f:[0,1]\to\R$ with graph of
Hausdorff dimension $2$.

It turns out that the seemingly pathological behavior of a continuous function with a graph of large dimension is the rule rather than the exception. 
Balka, Darji and Elekes recently showed \cite{balka_hausdorff_2016} that for any compact uncountable metric space $K$, within the space of continuous functions $f:K \to \mathbb{R}$, those with graphs of Hausdorff dimension $\dim K+1$ are prevalent in a measure-theoretic sense.
(In this paper \emph{$\dim$} always denotes 
\emph{Hausdorff dimension}.) 
Intuition might suggest that these graphs rely heavily on local oscillations to increase their Hausdorff dimension, and therefore would not be injective. 
Many of the classical constructions take advantage of this strategy. 
For example, the Weierstrass function fails to be injective in the most spectacular way: it lacks monotonicity on all arbitrarily short intervals. 
This is an example of a \ita{continuous non-injective} map with a large graph.
More recently, Eiderman and Larsen found that it is possible to trade continuity for injectivity: they constructed \cite{Eiderman_Larsen} an \ita{injective non-continuous} function on $[0,1]$ whose graph has Hausdorff dimension $2$.

It is therefore natural to ask whether there exist \ita{injective and continuous} real-valued functions that have large graph dimension.
Such a function cannot rely on local oscillations in the same way as the Weierstrass function:
clearly, if a continuous real-valued injective function is defined on an interval, then it is monotone and necessarily has dimension one.
Hence, such a function must be defined on some carefully chosen set.

In the present paper, we answer this question in the affirmative.
We construct compact sets $K_1,K_2 \subset [0,1]$ of dimension $t$, as well as a homeomorphism $f:K_1\to K_2$ so that $\dim \text{graph}(f)=1+t$, for any desired value of $t \in [0,1]$. This dimension is maximal because $\Gamma$ is contained in the Cartesian product $K_1 \times [0,1]$.
The construction of such a function reduces to assembling a set $\Gamma \subset [0,1]^2$ which projects injectively onto $K_1$ in the domain and $K_2$ in the codomain.
Our method of assembling $\Gamma$ is a modified Venetian blind construction, in which we make extra effort to ensure injectivity of the projections.
This generalizes in many ways: first, the two coordinate axes can be replaced with any pair of (not necessarily orthogonal) lines, and this pair of lines can in turn be replaced with any finite or countable collection of lines. 
It is also natural to consider projections into lines inside the ambient space $\mathbb{R}^d$ rather than $\mathbb{R}^2$. 
This is our main result.

\begin{theorem}\label{thm:linesubspaces}
Let $\mathscr L$ be a finite or countable set of lines in $\mathbb R^d$.
Then for any $t\in [0,1]$, there exists a compact set $\Gamma \subseteq \mathbb [0,1]^d$ with  $\dim_H\Gamma=d-1+t$, such that each orthogonal projection $\pi_\ell:\Gamma\to\ell$ is injective with $\dim \pi_\ell(\Gamma)=t$.

Furthermore, consider each of the following statements:
\bnum
    \item The set $\Gamma$ has positive $(d-1+t)$-capacity and infinite $(d-1+t)$-dimensional Hausdorff measure.
    \item The $t$-dimensional Hausdorff measure of every $\pi_\ell(\Gamma)$ is $0$.
\enum
If $t=0$, then (1) holds; if $t=1$, then (2) holds; and if $t\in(0,1)$, then one can choose either of (1) or (2) to hold.
\end{theorem}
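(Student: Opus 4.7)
The plan is to build $\Gamma$ as a decreasing intersection $\Gamma=\bigcap_{n\ge 0}\Gamma_n$ via a modified Venetian blind construction, using an enumeration $\mathscr L=\{\ell_1,\ell_2,\dots\}$. Starting from $\Gamma_0=[0,1]^d$, at each stage $n\ge 1$ we refine $\Gamma_{n-1}$---itself a finite disjoint union of thin rectangular boxes---by replacing every constituent box with a carefully chosen finite family of even thinner sub-boxes, each thin in the direction of $\ell_n$. The sub-boxes are selected so that: (a) their $\ell_n$-projections are pairwise disjoint short intervals laid out in a Cantor-type pattern whose $t$-dimensional Hausdorff content is bounded below; (b) enough sub-boxes are retained, with sufficient transverse extent, to keep a positive $(d-1+t)$-dimensional content for $\Gamma_n$; and (c) by choosing the stage-$n$ sub-box thickness very small relative to the combinatorial data of stages $1,\dots,n-1$, the $\ell_i$-projections of $\Gamma_n$ for $i<n$ remain pairwise disjoint and their prescribed Cantor structure on $\ell_i$ is preserved.

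With the construction in hand, injectivity of each $\pi_{\ell_i}$ follows immediately: at every stage $n\ge i$, distinct sub-boxes of $\Gamma_n$ have disjoint $\ell_i$-projections, so in the limit each fiber of $\pi_{\ell_i}$ meets $\Gamma$ in at most one point. The stage-$i$ design makes $\pi_{\ell_i}(\Gamma)$ a Cantor-type set of Hausdorff dimension exactly $t$. For the upper bound $\dim\Gamma\le d-1+t$, note $\Gamma\subseteq\pi_{\ell_i}^{-1}(\pi_{\ell_i}(\Gamma))$; after changing to coordinates adapted to $\ell_i$ this is a set of the form $K\times[0,1]^{d-1}$ with $\dim K=t$, and the product formula for Hausdorff dimension yields $\dim\Gamma\le t+(d-1)$. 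The matching lower bound follows from a Frostman mass distribution $\mu$ on $\Gamma$ obtained by distributing unit mass uniformly across the sub-boxes of $\Gamma_n$ at each stage: the Cantor scaling along any fixed $\ell_i$ combined with the transverse $(d-1)$-dimensional extent of each sub-box yields the ball estimate $\mu(B(x,r))\le C r^{d-1+t}$.

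The main obstacle is item (c): completing the stage-$n$ refinement for $\ell_n$ without spoiling the earlier stages. The remedy is a rapid scale decay. Choose the stage-$n$ sub-box thickness so small that any placement of the retained sub-boxes needed to realize the prescribed Cantor pattern on $\ell_n$ moves their $\ell_i$-projections (for each $i<n$) by less than the minimum gap between the distinct $\ell_i$-projections inherited from $\Gamma_{n-1}$. Then no previously separated $\ell_i$-projections can collide in the limit, and the $\ell_i$-projection content can only shrink---never grow---toward its target dimension-$t$ limit. For the ``furthermore'' conclusions, the two alternatives correspond to two tunings of the stage-$n$ Cantor data: arranging the $t$-content of the stage-$n$ intervals on $\ell_n$ to diverge as $n\to\infty$ (feasible whenever $t<1$, and in particular at $t=0$ by keeping a growing number of very short intervals) yields infinite $(d-1+t)$-Hausdorff measure and positive $(d-1+t)$-capacity as in (1); arranging this content to vanish as $n\to\infty$ (feasible whenever $t>0$, and in particular at $t=1$ by forcing total interval length to tend to zero) yields $\mathcal H^t(\pi_\ell(\Gamma))=0$ as in (2); for $t\in(0,1)$ both tunings are compatible with the construction, so either conclusion can be achieved.
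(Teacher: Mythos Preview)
Your overall strategy---a Venetian blind construction followed by a mass distribution argument---is the same as the paper's. However, the injectivity and projection--dimension arguments have a genuine gap. You propose to handle each line $\ell_i$ at a single stage $i$, and assert that ``at every stage $n\ge i$, distinct sub-boxes of $\Gamma_n$ have disjoint $\ell_i$-projections.'' This is false for the construction you describe: when you refine a box along $\ell_n$ for $n>i$, the many thin sub-slices you retain all sit inside the same parent box and are separated only along $\ell_n$, so their $\ell_i$-projections \emph{coincide} (or overlap). Two points of $\Gamma$ lying in different stage-$(i{+}1)$ sub-boxes of the same stage-$i$ box may then have identical $\ell_i$-projection. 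For the same reason, your claim that the stage-$i$ design fixes $\pi_{\ell_i}(\Gamma)$ as a dimension-$t$ Cantor set is unsupported: stage $i$ produces only a finite union of intervals (dimension $1$), and subsequent refinements along other directions shrink the $\ell_i$-projection in a way you do not control.

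The paper's repair is to enumerate the lines so that each $\ell\in\mathscr L$ appears \emph{infinitely often} (together with technical conditions: any $d$ consecutive lines in the listing have linearly independent directions, and lines $d$ apart in the listing are non-orthogonal, needed for the parallelepiped estimates). Injectivity of $\pi_\ell$ then follows because along the subsequence of stages $k_i$ with $\ell_{k_i}=\ell$ the boxes have disjoint $\ell$-projections and their diameters shrink to zero; the dimension of $\pi_\ell(\Gamma)$ is read off from this same sequence of covers. A second subtlety you elide is that at stage $k$ the sub-boxes coming from \emph{different} parents must also project disjointly onto $\ell_k$; the paper arranges this by an extra small shift indexed by the parent (the $j$-index in its interval definition $I_k^{(h,j)}$). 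Finally, the clean Frostman bound $\mu(B(x,r))\le C r^{d-1+t}$ is too optimistic for this non-self-similar construction; the paper instead proves $\mu(Q)\le 2^{-qs}/q^2$ for balls $Q$ of diameter $2^{-q}$, via a two-case analysis depending on where $q$ falls relative to the scales $n_k$ and $n_k-a_k$, and then deduces positive capacity and infinite Hausdorff measure from a short energy computation.
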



In $\R^d$ we can consider projections into linear subspaces $w$ of any dimension.
Analogously, we construct large $\Gamma$ 
such that the projection $\pi_w:\Gamma \to w$ is injective and has dimension $\dim\pi_w(\Gamma)=\dim w-s$ for any prescribed $s \in [0,1]$. 
In this most generalized form, we once again find an easy upper bound on $\dim \Gamma$: since $\Gamma$ is contained in an isometric image of $\pi_w(\Gamma) \times w^\perp$, we have $\dim \Gamma \leq d-s$.
This maximum possible dimension is precisely the one that we obtain as our first corollary.

\begin{corollary}\label{cor:linearsubspaces}
Fix $d\geq 2$, and let $\mathscr W$ be a finite or countable collection of linear subspaces in $\R^d$ (not necessarily all of the same dimension). Then for any $s \in [0,1]$ there exists a compact set $\Gamma \subseteq \mathbb [0,1]^d$ with  $\dim\Gamma=d-s$, such that each projection $\pi_w:\Gamma\to w$ is injective with $\dim\pi_w(\Gamma)= \dim w-s$.
\end{corollary}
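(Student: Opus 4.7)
The plan is to deduce the corollary from Theorem \ref{thm:linesubspaces} by attaching to each $w\in\mathscr W$ a finite family of lines inside $w$: injectivity of the projections onto these lines will force injectivity of $\pi_w$, and their image dimensions will pin down $\dim\pi_w(\Gamma)$ from both sides. Concretely, for each $w\in\mathscr W$ with $k=\dim w\geq 1$, fix an orthonormal basis $e_1^w,\dots,e_k^w$ of $w$ and let $\ell_i^w=\mathrm{span}(e_i^w)$. The family $\mathscr L=\{\ell_i^w:w\in\mathscr W,\ 1\leq i\leq\dim w\}$ is countable, so Theorem \ref{thm:linesubspaces} applied to $\mathscr L$ with $t=1-s$ yields a compact $\Gamma\subseteq[0,1]^d$ with $\dim\Gamma=d-s$ such that each $\pi_{\ell_i^w}$ is injective on $\Gamma$ with $\dim\pi_{\ell_i^w}(\Gamma)=1-s$.

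Injectivity of $\pi_w$ on $\Gamma$ is immediate: since $\ell_1^w\subseteq w$ we have $w^\perp\subseteq(\ell_1^w)^\perp$, so two distinct points of $\Gamma$ having equal $\pi_w$-image (i.e., differing by a vector in $w^\perp$) would also have equal $\pi_{\ell_1^w}$-image, contradicting injectivity of $\pi_{\ell_1^w}$. For the dimension equality $\dim\pi_w(\Gamma)=k-s$, I would use the orthogonal decomposition $\pi_w(x)=\sum_{i=1}^k\pi_{\ell_i^w}(x)$, which gives $\pi_w(\Gamma)\subseteq\pi_{\ell_1^w}(\Gamma)+\cdots+\pi_{\ell_k^w}(\Gamma)$, a Lipschitz (addition) image of the product $\pi_{\ell_1^w}(\Gamma)\times\cdots\times\pi_{\ell_k^w}(\Gamma)$. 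The standard product inequality $\dim(A_1\times\cdots\times A_k)\leq\dim A_1+\sum_{i\geq 2}\overline{\dim}_B A_i$ (with $\overline{\dim}_B$ denoting upper box dimension), combined with the free bound that each $\pi_{\ell_i^w}(\Gamma)$ is a bounded subset of a line, yields $\dim\pi_w(\Gamma)\leq(1-s)+(k-1)=k-s$. For the matching lower bound, $\Gamma$ is contained in the Minkowski sum $\pi_w(\Gamma)+\pi_{w^\perp}(\Gamma)$, which is isometric to the orthogonal product $\pi_w(\Gamma)\times\pi_{w^\perp}(\Gamma)$; since $\pi_{w^\perp}(\Gamma)$ is a bounded subset of the $(d-k)$-dimensional subspace $w^\perp$, the same product inequality gives $d-s=\dim\Gamma\leq\dim\pi_w(\Gamma)+(d-k)$, whence $\dim\pi_w(\Gamma)\geq k-s$.

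The only real subtlety is the asymmetry of the product-dimension inequality, which calls for upper box rather than Hausdorff dimension on the auxiliary factors; this is harmless here because each such set is a bounded subset of a linear subspace of known dimension, so the ambient dimension supplies the required bound for free. Once this is acknowledged, all the estimates follow immediately from Theorem \ref{thm:linesubspaces} and the orthogonal structure of the projections.
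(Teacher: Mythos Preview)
Your argument is correct and follows essentially the same route as the paper: choose lines inside each $w$, apply Theorem~\ref{thm:linesubspaces} with $t=1-s$, derive injectivity of $\pi_w$ from injectivity onto a single line, and obtain the two dimension inequalities from product/containment bounds. The only difference is cosmetic: the paper picks just one line $\ell_w\subset w$ per subspace and observes directly that $\pi_w(\Gamma)$ sits in an isometric copy of $(w\cap\ell_w^\perp)\times\pi_{\ell_w}(\Gamma)$, which avoids the $k$-fold product and the attendant box-dimension bookkeeping you (correctly) flag.
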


Without the injectivity of the projections, this was proved in Claim 2.4 of \cite{chang_small_2018}.
Next, by applying Theorem \ref{thm:linesubspaces} to the standard basis vectors, we obtain the following corollary on the existence of homeomorphisms whose graphs have large dimension. The correspondence between bijective (specifically, coordinate-wise injective) functions $f$ and sets $\Gamma$ injective onto each coordinate axis is clear. That $f$ is a homeomorphism follows 
easily from the compactness of the graph $\Gamma$.

\begin{corollary}\label{cor:fxnexistence}
For any $d\geq1$ and $t\in [0,1]$, there exist compact $K,K_1,\dots K_d \subset[0,1]$ with dimension $t$ and a coordinate-wise injective homeomorphism $f:K \to K_{1}\times \dots \times K_{d}$ such that $\dim \mathrm{graph}(f)=d+t$. 
Further, if $t>0$ then each of $K,K_1,\dots K_d$ has $t$-dimensional Hausdorff measure $0$.
\end{corollary}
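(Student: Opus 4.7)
The plan is to reduce the corollary directly to Theorem~\ref{thm:linesubspaces} applied in $\R^{d+1}$, taking $\mathscr L$ to be the collection of $d+1$ standard coordinate axes. This yields a compact set $\Gamma \subseteq [0,1]^{d+1}$ with $\dim \Gamma = (d+1) - 1 + t = d+t$, such that each coordinate projection $\pi_i$ is injective on $\Gamma$ with image of Hausdorff dimension $t$. After identifying each axis with $[0,1]$, I would set $K := \pi_1(\Gamma)$ and $K_i := \pi_{i+1}(\Gamma)$ for $i = 1, \ldots, d$; these are compact subsets of $[0,1]$ of dimension $t$ by construction.

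Next I would use the injectivity of the first projection to define $f : K \to K_1 \times \cdots \times K_d$ by $f(\pi_1(\gamma)) = (\pi_2(\gamma), \ldots, \pi_{d+1}(\gamma))$, or equivalently $f = (\pi_2, \ldots, \pi_{d+1}) \circ \pi_1^{-1}$. This is well-defined and continuous because $\pi_1 : \Gamma \to K$ is a continuous bijection from a compact space onto a Hausdorff space, hence a homeomorphism. The injectivity of each $\pi_{i+1}$ gives coordinate-wise injectivity of $f$, and therefore $f$ is itself a continuous injection from a compact set into a Hausdorff product, hence a homeomorphism onto its image. Under the natural identification of $\R^{d+1}$ with $\R \times \R^d$, the graph of $f$ coincides with $\Gamma$, since each $\gamma \in \Gamma$ equals $\bigl(\pi_1(\gamma), f(\pi_1(\gamma))\bigr)$. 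Consequently $\dim \mathrm{graph}(f) = \dim \Gamma = d + t$.

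For the final clause, when $t \in (0,1]$ I would invoke alternative (2) of Theorem~\ref{thm:linesubspaces}, guaranteeing that the $t$-dimensional Hausdorff measure of each $\pi_\ell(\Gamma)$ vanishes; in particular $\mathcal{H}^t(K) = \mathcal{H}^t(K_i) = 0$ for every $i$. I do not anticipate any real obstacle here, as Theorem~\ref{thm:linesubspaces} essentially supplies all of the content. The only point that requires comment is that $f$ is a genuine homeomorphism onto its image, but this is immediate from the standard fact that a continuous bijection from a compact space to a Hausdorff space has continuous inverse.
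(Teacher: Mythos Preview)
Your proposal is correct and follows exactly the approach the paper indicates: apply Theorem~\ref{thm:linesubspaces} in $\R^{d+1}$ to the coordinate axes, read off $K$ and the $K_i$ as the coordinate projections of $\Gamma$, and use compactness to promote the bijection $\pi_1^{-1}$ to a homeomorphism so that $\mathrm{graph}(f)=\Gamma$. The paper's own justification is just a two-sentence sketch of precisely this argument, with the measure-zero clause likewise coming from option~(2) of Theorem~\ref{thm:linesubspaces}.
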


Denoting by $A(d,k)$ the set of $k$-planes in $\mathbb{R}^d$, we can place a natural metric on $A(d,k)$ through association with $\mathbb{R}^{(k+1)(d-k)}$. Through this metric one can investigate the relationship between the Hausdorff dimension of a collection $E\subset A(d,k)$ and the size (Lebesgue measure or dimension) of its union $B:=B_E$ in $\mathbb{R}^d$. 
In \cite[Theorem 1.3]{Oberlin2014} Oberlin shows that if $B$ has Lebesgue measure zero then $\dim E \leq (k+1)(d-k)-k$, and provides examples which demonstrate that this is tight. 
Concerning the Hausdorff dimension of $B$, in \cite[Corollary 1.12]{hera_hausdorff_2018} H\'era proves that $\dim B \geq k+\dim E/(k+1)$, and provides examples which are tight in some specific cases.
More concretely, for any $s\in[0,(k+1)(d-k)]$
she constructs a collection of
$k$-planes $E\subset A(d,k)$ with $\dim E=s$ such that
the union of the $k$-planes has the following Hausdorff dimension,
\begin{equation}\label{eq:Herafunction}
    h(k,s):=\begin{cases}
s-k\ceil{\frac{s}{k+1}}+2k & \text{if } \ceil{\frac{s}{k+1}} \geq \frac{k+s}{k+1} \\
k+\ceil{\frac{s}{k+1}} & \text{if } \ceil{\frac{s}{k+1}} \leq \frac{k+s}{k+1}.
\end{cases}
\end{equation}
H\'era also formulates the conjecture that this is
the best construction in the sense that whenever
$E\subset A(d,k)$ with $\dim E=s$ and $B$ is the 
the union of the $k$-planes of $E$ then 
$\dim B\ge h(k,s)$.

The examples furnished by H\'era and Oberlin involve collections of $k$-planes which may intersect one another or are parallel.
Since the objective is minimizing the size of $B$, it is not clear whether these intersections or collections of parallel $k$-planes are an important component of the construction. As an application of Corollary \ref{cor:fxnexistence}, we present constructions corresponding to those in \cite{Oberlin2014} and \cite{hera_hausdorff_2018}, with the additional property that they consist of disjoint, nonparallel $k$-planes. We found in Theorem \ref{thm:linesubspaces} that requiring injectivity of a continuous function will not necessarily reduce the Hausdorff dimension of its graph; here we find an analogous statement, that requiring $k$-planes to be disjoint and non-parallel does not necessarily increase the size of their union.


\begin{theorem}\label{thm:kplanes}
Let $d,k\in\N$ with $d\geq k+2$.
\begin{enumerate}
\item[(i)] There exists a compact set of disjoint, nonparallel $k$-planes $E \subset A(d,k)$ with $\dim E=(k+1)(d-k)-k$ so that $B$,
the union of $k$-planes in $E$, has Lebesgue measure zero.

\item[(ii)]
For any $s$ which satisfies $0 \leq s \leq \dim A(d,k)=(k+1)(d-k)$, there exists a compact set of disjoint non-parallel $k$-planes $E \subset A(d,k)$ with $\dim E=s$ such that $B$, the union of $k$-planes in $E$, has 
Hausdorff dimension $\dim B\le h(k,s)$ for the function
$h(k,s)$ defined in \eqref{eq:Herafunction}.
\end{enumerate}

\end{theorem}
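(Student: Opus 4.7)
The plan is to represent each non-vertical $k$-plane as the graph of an affine map $\R^k\to\R^{d-k}$, and to use Corollary~\ref{cor:fxnexistence} to parameterize a large family of such planes whose coordinate-wise injectivity forces pairwise disjointness and non-parallelism.

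First, fix the splitting $\R^d=\R^k\times\R^{d-k}$. Each $k$-plane transverse to $\{0\}\times\R^{d-k}$ admits a unique representation $P_{L,c}=\{(x,Lx+c):x\in\R^k\}$ with $(L,c)\in\R^{k(d-k)}\times\R^{d-k}$, placing coordinates on an open dense subset of $A(d,k)$. Two distinct parameters yield non-parallel planes iff $L_1\ne L_2$, and further give disjoint planes iff the inhomogeneous system $(L_1-L_2)x=c_2-c_1$ has no solution in $\R^k$.

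For part (i), I would apply Corollary~\ref{cor:fxnexistence} with ambient dimension $n=(k+1)(d-k)-k-1$ and $t=1$ (taking the variant for which each $K_i$ has $1$-dimensional Hausdorff measure zero), obtaining a coordinate-wise injective homeomorphism $f:K\to K_1\times\cdots\times K_n$ whose graph $\Gamma$ has $\dim\Gamma=n+1=(k+1)(d-k)-k$. I would then embed $\Gamma$ into $A(d,k)$ by a linear injection that holds $k$ carefully chosen entries of $(L,c)$ fixed (setting certain translation or slope entries equal to zero) and lets the remaining entries be those of $\Gamma$, producing a compact $E\subset A(d,k)$ of the desired Hausdorff dimension. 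The coordinate-wise injectivity of $f$, combined with the fixed-coordinate structure, reduces the intersection system for any two planes in $E$ to a condition that contradicts injectivity of one component of $f$. The model case $k=1,d=3$ illustrates the mechanism: parameterize lines as $(a_1,a_2,c_1,0)$ using $\Gamma\subset[0,1]^3$; intersection forces $(a_2-a_2')t=0$, hence (by injectivity of $a_2$ as a coordinate of $f$) $t=0$ and then $c_1=c_1'$, contradicting injectivity of $c_1$. Lebesgue measure zero of $B$ follows from Fubini: for each $x_0\in\R^k$, the slice $B\cap(\{x_0\}\times\R^{d-k})$ is the image of $E$ under a fixed affine map, and the ``Hausdorff measure zero'' property built into $\Gamma$ forces the slice to have zero $(d-k)$-dimensional Lebesgue measure.

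For part (ii), the same construction works after adjusting the parameter $t\in[0,1]$ (and the ambient dimension $n$) in Corollary~\ref{cor:fxnexistence} so that $\dim E=s$ for the desired $s\in[0,(k+1)(d-k)]$. Disjointness and non-parallelism persist by the same injectivity argument, and the dimension bound $\dim B\le h(k,s)$ follows by computing the dimension of each slice: the two cases in \eqref{eq:Herafunction} correspond to whether the ``slope'' or the ``translation'' part of the parameter contributes the dominant dimension to a generic slice, and the graph of $f$ realizes precisely the optimal trade-off between them.

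The hardest part will be the disjointness verification in the borderline regime $d=k+2$ with $k\ge 3$, where two non-parallel $k$-planes generically intersect in a $(k-2)$-dimensional affine subspace and the $(d-k)=2$ available translation coordinates do not suffice to fix $k\ge 3$ entries. Here one must fix a judicious mixture of slope and translation coordinates and check that the resulting rank-deficient intersection system still admits no nontrivial solutions, exploiting the full flexibility of the Venetian-blind construction underlying Theorem~\ref{thm:linesubspaces}.
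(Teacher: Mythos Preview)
Your framework is close to the paper's---both use Corollary~\ref{cor:fxnexistence} to produce a coordinate-wise injective graph $\Gamma$ of the right dimension and then embed it affinely into the chart $(L,c)\mapsto P_{L,c}$ on $A(d,k)$ by freezing $k$ of the coordinates---but you have not identified \emph{which} $k$ coordinates to freeze, and this is the whole point.

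The paper freezes the \emph{last row of the slope matrix}: set $L_{d-k,1}=\cdots=L_{d-k,k}=0$ and let the last translation coordinate $c_{d-k}$ be the domain variable $t\in K$. Then every plane $P_{L,c}$ in $E$ is contained in the horizontal hyperplane $\{x_d=t\}$; distinct $t$'s give disjoint hyperplanes, so disjointness of the $k$-planes is automatic, and $B\subset\R^{d-1}\times K$ gives $\lambda_d(B)=0$ immediately from $\lambda_1(K)=0$. No intersection system needs to be analyzed, and no Fubini-over-slices argument is required.

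Your alternative---freezing translation coordinates (as in your $(a_1,a_2,c_1,0)$ example) and then arguing that the system $(L-L')x=c'-c$ is inconsistent via coordinate-wise injectivity---does not scale. Already for $k=2$, $d=4$, freezing $c_1=c_2=0$ forces all planes through the origin, and any other choice of two frozen entries still leaves a $2\times 2$ difference matrix $L-L'$ whose entries are \emph{all} nonzero (by coordinate-wise injectivity), hence generically of rank $2$, so $(L-L')x=c'-c$ is solvable. More generally, when $d-k<k$ the system is underdetermined, and coordinate-wise injectivity gives you that each entry of $L-L'$ is nonzero, which is the opposite of what you need (you would want $\mathrm{rank}(L-L')<d-k$). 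The ``judicious mixture'' you allude to in the last paragraph is precisely the paper's choice: kill an entire row of $L$, so that one equation in the intersection system reads $0=c_{d-k}'-c_{d-k}\neq 0$.

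For part~(ii) your sketch is too vague to be a plan. The paper does not compute slice dimensions at all; it sets $m=\lceil s/(k+1)\rceil$, applies Corollary~\ref{cor:fxnexistence} with codomain $\R^{(k+1)(m-1)}$ and domain $A\subset[0,1]$ of dimension $s-(k+1)(m-1)$ (or $0$, in the second case of~\eqref{eq:Herafunction}), again freezes the last $d-k-m+1$ rows of $L$ and the corresponding entries of $c$, and reads off $\dim B$ from the containment $B\subset\R^{m+k-1}\times A\times\{0\}$. The two cases in $h(k,s)$ come from the case split on $m$, not from a slope-versus-translation trade-off in a single construction.
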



Note that since any compact set $E$ has a compact
subset of any given dimension less than $\dim E$
we can also get $E$ with smaller than the above prescribed dimension.
This observation, in combination with (i) and the result of \cite{Oberlin2014} that if $B$ has Lebesgue measure zero then $\dim E \leq (k+1)(d-k)-k$, 
gives the immediate corollary that we may exchange any such collection $E$ for another consisting of disjoint, nonparallel planes.

\begin{corollary}\label{cor:kplanemeasure}
Suppose $E \subset A(d,k)$ such that $B$, the union of those $k$-planes in $E$, has Lebesgue measure zero.
Then there exists a compact set $E' \subset A(d,k)$ consisting of disjoint, nonparallel $k$-planes such that $\dim E'=\dim E$, with the property that $B'$, the union of the $k$-planes in $E'$, has Lebesgue measure zero.
\end{corollary}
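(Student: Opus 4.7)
The plan is to combine Oberlin's upper bound with Theorem \ref{thm:kplanes}(i), and then extract a compact subset of the desired dimension. By Oberlin's theorem \cite[Theorem 1.3]{Oberlin2014}, the hypothesis that $B$ has Lebesgue measure zero forces $\dim E\le (k+1)(d-k)-k$. On the other hand, Theorem \ref{thm:kplanes}(i) produces a compact set $E_0\subset A(d,k)$ of disjoint, nonparallel $k$-planes with $\dim E_0=(k+1)(d-k)-k$ whose union $B_0$ has Lebesgue measure zero. Thus $\dim E\le \dim E_0$, and it suffices to cut $E_0$ down to a compact subset of dimension exactly $\dim E$.

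If $\dim E = (k+1)(d-k)-k$, we simply take $E':=E_0$. Otherwise, we invoke the standard fact that any compact metric space of Hausdorff dimension at least $s$ contains a compact subset of Hausdorff dimension exactly $s$ (a consequence of Howroyd's theorem on the existence of compact subsets of finite, positive $s$-dimensional Hausdorff measure, for any $s$ strictly less than the ambient dimension). Apply this with $s=\dim E$ to obtain a compact $E'\subset E_0$ with $\dim E'=\dim E$. Since $E'\subset E_0$, the $k$-planes in $E'$ remain disjoint and pairwise nonparallel, and the union $B'=B_{E'}$ is a subset of $B_0$, hence also has Lebesgue measure zero.

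I do not anticipate a serious obstacle here: all of the real content is packed into Theorem \ref{thm:kplanes}(i) and into Oberlin's inequality. The only delicate point is verifying the subset-of-prescribed-dimension step, but this is a textbook application of Howroyd's theorem applied to the compact metric space $E_0\subset A(d,k)$, together with monotonicity of Hausdorff dimension under inclusion.
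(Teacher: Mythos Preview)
Your proposal is correct and follows essentially the same route as the paper: combine Oberlin's bound $\dim E\le (k+1)(d-k)-k$ with the compact set $E_0$ from Theorem~\ref{thm:kplanes}(i), then pass to a compact subset of the prescribed dimension. The paper states this argument in the paragraph immediately preceding the corollary, invoking the same ``compact subset of any given smaller dimension'' fact that you attribute to Howroyd.
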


We now consider one final application of Theorem \ref{thm:linesubspaces}.
It is well known that, for a collection of nonvertical lines in the plane which covers a vertical line, the union must have Hausdorff dimension $2$. 
In fact, this is essentially the same as the classical result of Davies \cite{Davies} which states that every Besicovitch set in the plane must have Hausdorff dimension $2$. 
One can ask what we can say in the opposite situation: if a collection of lines in the plane intersects a vertical line $w$ in a small set, does this imply that the union of the lines is small? 
The answer is clearly in the negative: for example, taking all non-vertical lines through a fixed point of $w$ is a counter-example. 
There are two natural ways to exclude this triviality: we could request the chosen lines to intersect $w$ in distinct points; or alternatively, 
we can require small intersections not only with $w$ but with more than one vertical line.
By combining Theorem \ref{thm:linesubspaces} with duality and projection theorems we show that even if we have both requirements it is possible that the intersection with the prescribed vertical lines are very small despite the union of the lines being very large. 
In fact, more generally we can construct a collection of hyperplanes in $\R^d$ with these properties.

\begin{theorem}\label{thm:parallellines}
Let $d\ge 2$ and 
let $w_1,w_2,\dots$ be a countable collection of parallel 
lines in $\R^{d}$. 
Then there exists a compact collection $H$ of hyperplanes in $\R^{d}$, not parallel to the lines $w_i$, such that every point of every $w_i$ intersects at most one $h \in H$, the set $F=\cup_{h \in H}h$ has positive Lebesgue measure, and $\dim(F\cap w_i)=0$ for every 
$w_i$.
\end{theorem}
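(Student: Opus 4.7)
The plan is to combine point--hyperplane duality with Theorem \ref{thm:linesubspaces}, and then deduce positive Lebesgue measure for $F$ from a Marstrand-type projection theorem. First, after a rigid motion, assume all $w_i$ are parallel to $e_d$, so that $w_i = \{(q_i', t) : t \in \R\}$ for some fixed $q_i' \in \R^{d-1}$. Every hyperplane in $\R^d$ not parallel to $e_d$ can be written uniquely as $h_{a,b} := \{x : x_d = a \cdot x' + b\}$ for $(a,b) \in \R^{d-1} \times \R$, establishing a duality between such hyperplanes and points in a dual copy of $\R^d$. Under this duality, $h_{a,b} \cap w_i$ is the single point $(q_i', L_i(a,b))$, where $L_i(a,b) := a \cdot q_i' + b$ is the linear functional on $\R^d$ with kernel perpendicular to $v_i := (q_i', 1)$. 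Hence $L_i$ agrees, up to the positive scalar $\|v_i\|$, with the orthogonal projection onto the line $\ell_i := \R v_i$; in particular $L_i$ is injective on a set $S$ iff $\pi_{\ell_i}$ is, and their images on $S$ share the same Hausdorff dimension.

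Next, I would apply Theorem \ref{thm:linesubspaces} to the countable family $\{\ell_i\}_{i \geq 1}$ with $t = 0$, selecting option (1). This yields a compact $\Gamma \subseteq [0,1]^d$ with $\dim \Gamma = d-1$, positive $(d-1)$-capacity, and each $\pi_{\ell_i}$ injective on $\Gamma$ with $\dim \pi_{\ell_i}(\Gamma) = 0$. Set $H := \{h_{a,b} : (a,b) \in \Gamma\}$; this is compact because $\Gamma$ is compact and the duality is a homeomorphism onto its image in the affine Grassmannian. Injectivity of $L_i$ on $\Gamma$ means distinct hyperplanes in $H$ meet $w_i$ at distinct points, so every point of $w_i$ lies on at most one element of $H$. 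Moreover $F \cap w_i$ is the image of $L_i(\Gamma)$ under the isometry $t \mapsto (q_i', t)$, hence has Hausdorff dimension $0$.

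The remaining (and principal) task is to show that $F$ has positive $\mathcal L^d$-measure. By Fubini, it suffices to show that for $x'$ in a positive-$\mathcal L^{d-1}$-measure subset of $\R^{d-1}$, the slice $F_{x'} = \{a \cdot x' + b : (a,b) \in \Gamma\}$ has positive $\mathcal L^1$-measure. But $F_{x'}$ coincides up to the scalar $\|(x',1)\|$ with the orthogonal projection of $\Gamma$ onto the line through $(x',1)$, and $x' \mapsto (x',1)/\|(x',1)\|$ is a smooth diffeomorphism of $\R^{d-1}$ onto an open hemisphere of $S^{d-1}$. So it is enough to show that almost every orthogonal projection of $\Gamma$ onto a line in $\R^d$ has positive $\mathcal L^1$-measure. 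For $d \geq 3$, this follows from Marstrand's projection theorem since $\dim \Gamma = d-1 > 1$; alternatively, positive $(d-1)$-capacity yields a probability measure on $\Gamma$ with finite $(d-1)$-energy, and the Kaufman--Mattila theorem then guarantees absolute continuity of the projected measure in almost every direction. The hard part is the boundary case $d = 2$, where $\dim \Gamma = 1$ lies exactly at the Marstrand threshold and neither result applies directly; there I would expect to need to exploit the explicit Venetian-blind structure of $\Gamma$ inherited from the proof of Theorem \ref{thm:linesubspaces} to extract a rectifiable piece of positive $\mathcal H^1$-measure, after which Besicovitch's projection theorem yields positive-measure projections in almost every direction.
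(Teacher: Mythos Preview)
Your duality setup and the invocation of Theorem~\ref{thm:linesubspaces} with $t=0$ and option~(1) match the paper's proof exactly. The only divergence is in the final step, where you split into cases $d\ge 3$ and $d=2$.

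The paper does not need this case split. It cites Mattila's result (\cite[Corollary 9.10]{mattila_fractal}): if a Borel set has positive $m$-capacity, then its orthogonal projection onto almost every $m$-dimensional subspace has positive $m$-dimensional Lebesgue measure. Applied with $m=1$, positive $1$-capacity already suffices for almost every line projection to have positive $\mathcal L^1$-measure. Since $\Gamma$ has positive $(d-1)$-capacity and $d-1\ge 1$, it has positive $1$-capacity, and the argument closes uniformly for all $d\ge 2$. You essentially state this yourself (``positive $(d-1)$-capacity yields a probability measure on $\Gamma$ with finite $(d-1)$-energy, and the Kaufman--Mattila theorem then guarantees absolute continuity of the projected measure''), but you list it only as an alternative for $d\ge 3$; the point is that the capacity version, as opposed to the strict-inequality $L^2$-density version, works at the threshold $s=m=1$ too.

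Your proposed workaround for $d=2$---extracting a rectifiable piece of $\Gamma$ and invoking Besicovitch---is not only unnecessary but would almost certainly fail. Venetian-blind constructions are the paradigmatic source of purely unrectifiable sets; that is precisely how one builds sets with many small projections. There is no reason to expect $\Gamma$ to contain a rectifiable piece of positive $\mathcal H^1$-measure, and option~(1) of Theorem~\ref{thm:linesubspaces} gives infinite $\mathcal H^1$-measure rather than a finite positive piece to which Besicovitch could be applied.
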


Our paper is organized as follows.
In Section \ref{sec:applications}, we deduce Corollary \ref{cor:linearsubspaces} and Theorem \ref{thm:parallellines} from Theorem \ref{thm:linesubspaces}.
In Section \ref{sec:kplanes} we prove Theorem \ref{thm:kplanes}, using as a crucial ingredient the homeomorphisms furnished by Corollary \ref{cor:fxnexistence}.
In Section \ref{sec:construction} we construct a suitable set $\Gamma$ towards proving Theorem \ref{thm:linesubspaces}.
There we also prove various geometric lemmas relating to our construction.
Finally, in Section \ref{sec:dimensioncomp} we verify that $\Gamma$ and its projections have the alleged dimensions.

\section{Proofs of the direct applications of our main result}\label{sec:applications}


\subsection{Generalization to higher dimensional subspaces}


\bpf[Proof of Corollary \ref{cor:linearsubspaces}]
Let $\mathscr L$ be a collection of lines such that for each $w \in \mathscr W$ there is some $\ell_w \in \mathscr L$ such that $\ell_w \subset w$. 
By Theorem \ref{thm:linesubspaces}, there exists a compact set $\Gamma$ of Hausdorff dimension $d-s$
such that $\dim\proj_{\ell}=1-s$ for every $\ell\in \mathscr L$.
Since the projections $\pi_{\ell}:\Gamma\to\ell$ are injective, so 
are the projections $\pi_w:\Gamma\to w$.
Hence, it suffices to show that $\dim \pi_w(\Gamma)=\dim w-s$. Because $\Gamma$ is contained in some isometric image of $w^\perp\times\pi_w(\Gamma)$, we have $\dim\Gamma\leq d-\dim w+\dim\pi_w(\Gamma)$, which implies $\dim\pi_\w(\Gamma)\geq \dim w-s$.
As for the upper bound, by the inclusion 
$\ell_w\subset w$ we have that $\pi_w(\Gamma)$ is contained in some isometric image of $(w\cap\ell_w^\perp)\times\pi_{\ell_w}(\Gamma)$, which has dimension $\dim w-s$.
\epf




\subsection{Large union of hyperplanes with small injective sections}
\label{sec:parallellines}
\bpf[Proof of Theorem \ref{thm:parallellines}]
For any $x\in\R^{d-1}$ let $v_x$ denote the ``vertical'' line 
$\{x\}\times\R$ in $\R^{d}$. Without loss of generality we can suppose that the parallel lines 
$w_i$ are vertical; that is, they are of the form $w_i=v_{x_i}$ for some $x_i\in\R^{d-1}$.
For any $(a,b)\in\R^{d-1}\times\R$ let $P_{a,b}$
denote the hyperplane $\{(x,y)\in\R^{d-1}\times\R:y=a\cdot x +b\}$ in
$\R^{d}$,
and for any $A\subset\R^{d}$ let 
$E(A):=\cup_{(a,b)\in A} P_{a,b}$.
Then, we have
$$
E(A)\cap v_x = \{(x, a\cdot x +b)\in\R^{d-1}\times\R : (a,b)\in A\} \qquad (x\in\R^{d-1}),
$$ 
and therefore the map $A \mapsto E(A)\cap v_x$ is a scaled copy of the orthogonal
projection of $A$ to a line in the direction $(x,1)$.

For each $i$ we let $\ell_i$ be a line in $\R^d$ with direction
$(x_i,1)$ and apply Theorem \ref{thm:linesubspaces} to this collection with $t=0$.
This yields a compact set $\Gamma \subset \R^d$ of
positive $(d-1)$-capacity
such that $\pi_{\ell_i}\big{|}_{\Gamma}$ is injective with $\dim \pi_{\ell_i}(\Gamma)=0$.
Now we take $H:=\{P_{a,b} : (a,b)\in \Gamma\}$ and $F:=\cup_{h\in H}h$.
Then $H$ is a compact collection of $(d-1)$-dimensional hyperplanes in $\R^{d}$, not parallel to the lines $w_i$, and also $F=E(\Gamma)$.
The projection of $\Gamma$ into the line $\ell_i$ in the direction $(x_i,1)$
corresponds to the intersection $F\cap w_i$. 
Since these projections are injective, every point of each $w_i$ is contained in at most one $h \in H$.
It is also clear that $\dim(F\cap w_i)=\dim \pi_{\ell_i}(\Gamma) = 0$ for every $w_i$.
 
It remains to check that $F=\cup_{h\in H} h$ has positive
Lebesgue measure.
By a result of Mattila \cite[Corollary 9.10]{mattila_fractal}, if a set has positive $m$-capacity then its projection to almost every $m$-dimensional subspace has positive Lebesgue measure.
We can apply this with $m=1$ and deduce that the projection of $\Gamma$ to almost every line through the origin has positive Lebesgue measure.
Thus almost every vertical slice $v_x \cap F$ has positive measure, so by Fubini, $F$ has
positive Lebesgue measure.
\epf

\section{Disjoint non-parallel $k$-planes}\label{sec:kplanes}

In this section we prove Theorem \ref{thm:kplanes}, which consists of modifications of constructions given in \cite{hera_hausdorff_2018} and \cite{Oberlin2014}. In both cases we present constructions with the same Hausdorff dimension as those previously presented, with the additional property that the $k$-planes used are disjoint and non-parallel (whereas in \cite{hera_hausdorff_2018} and \cite{Oberlin2014} they were not).

As stated in the introduction, $A(d,k)$ denotes the set of $k$-dimensional affine subspaces in $\mathbb{R}^d$. 
We use a matrix formulation of the encoding of $A(d,k)$ used in \cite{Oberlin2014}. 
Given a pair $(Y,y_0)$, where $Y$ is a $(d-k)\times k$ matrix and $y_0$ is a $(d-k)\times 1$ vector, we define the following $k$-plane,

\begin{equation}\label{eq:P(y)defn}
    P(Y,y_0):=\set{(x,y_0+Y\cdot x): x \in \mathbb{R}^k}.
\end{equation}


Note that this encoding cannot represent all $k$-planes: if a $k$-plane does not pass through a point where the first $k$ coordinates are $0$, then it cannot be encoded in this form. 
For example, in $\mathbb{R}^2$, lines parallel to the $y$ axis cannot be written as $y=mx+b$. However, since this restriction is very weak, almost every plane in $A(d,k)$ can be represented in this way and this is sufficient for our considerations. 
Having encoded almost all elements of $A(d,k)$ as points in $\mathbb{R}^{(k+1)(d-k)}$, we inherit a metric on these $k$-planes from the Euclidean metric on $\mathbb{R}^{(k+1)(d-k)}$.

The proofs of the two parts of Theorem \ref{thm:kplanes} are similarly structured. They seek to create a collection of disjoint, nonparallel $k$-planes $E$ so that $E$ is large, yet the union of those planes found in $E$ is small. This is accomplished by utilizing the function $f$ furnished by Corollary \ref{cor:fxnexistence}. In Equation \eqref{eq:P(y)defn} one may interpret $Y$ as the orientation and $y_0$ the displacement of the given $k$-plane. In our proof we will determine $Y$ and the first $(d-k)-1$ coordinates of $y_0$ by applying $f$ to the $(d-k)$th coordinate of $y_0$. The large dimension of the graph of $f$ will provide the largeness of $E$ while the injectivitiy of $f$ will ensure that such a family of lines is not parallel.

\subsection{$B$ has Lebesgue measure zero}\label{sec:measuresec}

\begin{proof}[Proof of Theorem \ref{thm:kplanes}, part (i)]
Let $\lambda_d$ denote the $d$-dimensional Lebesgue measure.
By Corollary \ref{cor:fxnexistence}, there exists a compact set $K \subset[0,1]$ with dimension 1 and Lebesgue measure 0, as well as a continuous entry-wise injective function $f: K \to \mathbb{R}^{(k+1)(d-1-k)}$ such that $\dim \text{graph}(f) = (k+1)(d-1-k)+1$. We view the codomain $\mathbb{R}^{(k+1)(d-1-k)}$ as the space of pairs of $(d-1-k)\times k$ and $(d-1-k)\times 1$ matrices over $\mathbb{R}$, by splitting $f$ into $f_1:K \to \mathbb{R}^{(d-1-k)\times k}$ and $f_2:K \to \mathbb{R}^{(d-1-k)\times 1}$. Then we define the following collection of $k$-planes.



\[E:=\set{P(Y,y_0):Y=
\left[\begin{array}{c}
  f_1(t)\\\hline
  0\ \cdots\ 0
\end{array}\right],y_0=\left[\begin{array}{c}
  f_2(t)\\  \hline
  t
\end{array}\right], y_0^{d-k}=t\in K
  },\]
where $P(Y,y_0)$ is defined in (\ref{eq:P(y)defn}).

The function $f(t)$ determines the orientation and positioning of a single $k$-plane lying in $\mathbb{R}^{d-1} \times \{t\}$ for a given $t \in K$. Then $B \subset \mathbb{R}^{d-1} \times K$, and therefore this set satisfies $\lambda_d(B) \leq \lambda_d(\mathbb{R}^{d-1} \times K)=0$, where the last equality is furnished by $\lambda_1(K)=0$.
Furthermore, our representation of $E \subset A(d,k)$ is simply $\text{graph}(f) \times \{0\}\subset A(n,k)$, viewing elements of $A(n,k)$ by their identification in $\R^{(k+1)(d-k)}$. 
Then $\dim E = \dim \text{graph}(f) =(k+1)(d-k-1)+1 = (k+1)(d-k)-k$, as needed. 
Additionally, each $k$-plane in $E$ is disjoint since each $k$-plane is contained within a different slice $\mathbb{R}^{d-1}\times \{t\}$. Since $f$ is injective in each coordinate, each of the $k$-planes will have a different value for $Y_{1,1}$ in particular. Since this coordinate is one component of the orientation of the $k$-planes, they will be nonparallel.
\end{proof}

\subsection{$B$ has limited Hausdorff dimension}\label{sec:dimsec}

\begin{proof}[Proof of Theorem \ref{thm:kplanes}, part (ii)]
 
We modify the construction given in \cite{hera_hausdorff_2018} to select only $k$-planes which are disjoint and nonparallel.
Set $m = \ceil{s/(k+1)}$. If $m=0$ then $s=0$ and setting $E$ to a single $k$-plane suffices. If $m=1\geq (k+s)/(k+1)$, then $s\leq 1$ and so by \cite{hera_hausdorff_2017} taking $E$ any $s$-dimensional collection of disjoint, nonparallel $k$-planes produces $\dim B = k+s$.

If $m \geq (k+s)/(k+1)$ and $m\geq 2$, then using Corollary \ref{cor:fxnexistence} we choose some $A \subset [0,1]$ with $\dim A = s-(k+1)(m-1)\in(0,1]$, as well as a coordinate-wise injective homeomorphism $f: A \to \mathbb{R}^{(k+1)(m-1)}$ with $\dim \text{graph}(f) = (k+1)(m-1)+\dim A=s$. 
Once again we view the codomain $\mathbb{R}^{(k+1)(m-1)}$ as the space of pairs of $(m-1)\times k$ and $(m-1)\times 1$ matrices over $\mathbb{R}$, by splitting $f$ into two maps $f_1:K \to \mathbb{R}^{(m-1)\times k}$ and $f_2:K \to \mathbb{R}^{(m-1)\times 1}$.
Then we define the following collection of $k$-planes,
\begin{equation}\label{eqn:defn_of_E}
E:=\set{P(Y,y_0):Y= \left[\begin{array}{c}
\\
  f_1(t)\\
  \\
  \hline
  0 \cdots 0\\
  \hline
  0 \cdots 0\\
  \vdots \ddots \vdots\\
  0 \cdots 0\\
\end{array}\right],y_0=\left[\begin{array}{c}
\\
  f_2(t)\\
  \\
  \hline
  t\\
  \hline
  0\\
  \vdots\\
  0\\
\end{array}\right], y_0^m=t \in A}.
\end{equation}


In this case, viewing elements of $A(n,k)$ by their identification in $\R^{(k+1)(d-k)}$, we have $E = \text{ graph}(f) \times \{0\}\subset A(d,k)$, which implies 
$\dim E= \dim \text{graph}(f) = s$, as needed. 
Further, since $B$ is contained within $\mathbb{R}^{m+k-1}\times A$, we also have that $\dim  B \leq m+k-1+\dim A=s-k\ceil{s/(k+1)}+2k$. 
The $k$-planes are disjoint because, as before, they each lie in a different copy of $\mathbb{R}^{m+k-1}$, and they are nonparallel because $f_1$ is coordinate-wise injective. 

Finally, if $m \leq (k+s)/(k+1)$, we again use Corollary \ref{cor:fxnexistence} to choose some $A \subset [0,1]$ with $\dim A = 0$, as well as a coordinate-wise injective homeomorphism $f: A \to \mathbb{R}^{(k+1)m}$ with $\dim \text{graph}(f) = (k+1)m$. Then setting $E'$ as we defined $E$ in equation \eqref{eqn:defn_of_E} above (replacing $m$ with $m+1$ in the definition of $E'$), we have $\dim E' = (k+1)m \geq s$, while $B$ is contained within $\mathbb{R}^{m+k}\times A$.
This implies $\dim B \leq m+k+\dim A=\ceil{s/(k+1)}+k$, as needed. 
Finally, since $E'$ is closed we may take a compact $s$-dimensional subset $E$ of $E'$ to complete the proof.
\end{proof}
 
\begin{remark}

While both of these constructions are at least as strong as the best existing results, (i) is more complete than (ii) because, as it was mentioned in the introduction, there are still gaps in our understanding of the dimension case, regardless of whether the $k$-planes are required to be disjoint or nonparallel. 

With some extra effort we can guarantee 
$\dim B= h(k,s)$ in Theorem~\ref{thm:kplanes} (ii)
by augmenting $E$ with a suitably chosen simple collection of disjoint non-parallel $k$-planes; it is not difficult to increase $\dim B$ leaving $\dim E$ the same. However, this may not be interesting, since if one happens to get 
$\dim B < h(k,s)$ in Theorem~\ref{thm:kplanes} (ii)
then this construction surpasses the current best known (even without the extra condition that the $k$-planes are disjoint and non-parallel). In fact, it would give a counter-example to the alread mentioned conjecture of H\'era (\cite[Conjecture 1.16]{hera_hausdorff_2018}), which states that such example cannot exist. In other words, the conjecture of H\'era would imply 
$\dim B= h(k,s)$ in Theorem~\ref{thm:kplanes} (ii).

On the other hand, in \cite{Oberlin2014} it is shown that if $B$ has Lebesgue measure zero then $\dim E\leq (k+1)(d-k)-k$, and therefore (i) of Theorem \ref{thm:kplanes} constructs an extremal example. This dichotomy explains why we have Corollary \ref{cor:kplanemeasure} for (i) and not (ii) of Theorem \ref{thm:kplanes}.
\end{remark}

\section{The set $\Gamma$}\label{sec:construction}




In this section, we construct $\Gamma$ and compute salient attributes of it that will affect dimension and measure computations in the following section.

\subsection{Modification and extension of the collection of lines}

As we will 
see later, it is prudent to replace our collection of lines $\sL$
with a sequence satisfying a convenient collection of properties.


\begin{lemma}\label{lem:better_sequence_of_lines}
Let $\sL$ be a countable collection of lines in $\mathbb R^d$ which go through the origin.
Then there exists a sequence
$\ell_1,\ell_2,\ell_3,\dots$ so that:
\begin{enumerate}
    \item \label{item:infinite_lines}Every $\ell\in\sL$ appears in $\set{\ell_i}$ infinitely many times.
    \item Any $d$ consecutive lines in $\set{\ell_i}$ have linearly independent directions.
\end{enumerate}
\end{lemma}
\begin{proof}
First, we take $H$ a $d-1$-dimensional subspace in $\mathbb{R}^d$ which does not contain any $\ell \in \mathscr L$, and let $e_1,\dots e_{d-1}$ be lines in $H$ through the origin with linearly independent directions.
Then enumerate the lines $\ell \in \mathscr L$ so that each appears infinitely often, and insert between each line the $d-1$ lines $e_1,\dots,e_{d-1}$. This new enumeration satisfies our constraints.
\end{proof}

\subsection{The construction of $\Gamma$}

Here we construct a compact set $\Gamma\sub\R^d$ which, as we will argue in this section and the next, suffices to prove Theorem \ref{thm:linesubspaces}. It will depend on our choice of two sequences, $n_k$ and $a_k$, which we will specify in Lemma \ref{lem:the_BETTER_sequences_exist}, but for now we define $\Gamma$ for arbitrary positive real sequences $n_k$ and $a_k$.

\begin{definition}\label{def:long_winded_Gamma}
Let $(a_k)$ and $(n_k)$ be positive real sequences.
For $h,j\in\Z$, we define the following interval on the line $\ell_k$:
\begin{align}\label{eqn:Idefn}
    I_{k}^{(h,j)}
    &:=[h\!\cdot\! 2^{-n_{k}+a_{k}}+j\!\cdot\! 2^{-n_{k}+1},
    h\!\cdot\! 2^{-n_{k}+a_{k}}+j\!\cdot \!2^{-n_{k}+1}+2^{-n_{k}}],
\end{align}
where by 
an interval $[a,b]$ on the line $\ell$ 
we mean the
closed line segment connecting $\smash{a\!\cdot\!\hat\ell}$ to $\smash{b\!\cdot\!\hat\ell}$, where $\smash{\hat\ell}$ is the unit vector in the direction of $\ell$.
Observe that for fixed $k$, these are segments of the same length, and as we'll prove in Lemma \ref{lem:intervals_are_disjoint}, these segments are disjoint for a suitable choice of $a_k$ and $n_k$.

For a line $\ell_k$, let $\pi_{\ell_k}:\R^d\to\ell_k$ be the orthogonal projection onto $\ell_k$.
We define sets $\Gamma_0\supset\Gamma_1\supset\ldots$ by induction.
We first set 
\[\Gamma_0=\Gamma_1=\cdots=\Gamma_d:=[0,1]^d.\]
Suppose that $\Gamma_{k-1}$ is the union of a collection $R_{k-1}$ of $2^{m_{k-1}}$ identical disjoint solid closed parallelotopes: 
\begin{equation*}
\Gamma_{k-1}=\bigcup_{j=1}^{2^{m_{k-1}}}R_{k-1}^{(j)},
\end{equation*}
where
$m_{k-1}:= \log_{2}{|R_{k-1}|}$. Using these, we inductively define
\begin{equation}\label{eqn:Gammakdefn}
\Gamma_{k}:=
\bigcup_{h\in\Z}\bigcup_{j=1}^{2^{m_{k-1}}}
\ 
\begin{cases} \pi_{\ell_k}^{-1}(I_{k}^{(h,j)})\cap R_{{k}-1}^{(j)} &\mbox{if this is a parallelotope} \\ 
\emptyset & \mbox{otherwise.}\end{cases}
\end{equation}
Finally, we define
\[\Ga:=\bigcap_{k\ge 1}\Ga_k.\]
\end{definition}

\subsection{Interpreting $\Gamma$}

We now motivate and illustrate this definition.
We defined $\Gamma$ to be the intersection of a nested sequence of compact sets $\Gamma_0\supseteq \Gamma_1\supseteq \Gamma_2\supseteq\dots$, where the $\Gamma_k$ are defined inductively in (\ref{eqn:Gammakdefn}).
Each $\Gamma_k$ is the disjoint union of $2^{m_k}$ identical closed parallelotopes $\smash{R_{k}^{(j')}}$, for $j'=1,\dots,2^{m_k}$; we will use $\smash{R_{k}}$ to denote the collection of such $\smash{R_{k}^{(j')}}$.
We determined the size and relative positioning of these parallelotopes using positive real sequences $(n_k)$ and $(a_k)$, and in this section we will illustrate these geometric objects.
 In the next section we will estimate $m_k$ in terms of these sequences.
 
For the purposes of visualization consider the case when $a_k$ and $n_k$ are both rapidly increasing with $a_k <n_k$. When we assemble $\Gamma_{k}$ from $\Gamma_{k-1}$, from each parallelotope $R_{k-1}^{(j)}$ in $\Gamma_{k-1}$ we are taking many smaller parallelotopes $\smash{R_{k}^{(j')}}$, as in Figure \ref{fig:subpipedsinlargerpipeds}.
\begin{figure}[!ht]
  \centering
  \includegraphics[width=0.8\textwidth]{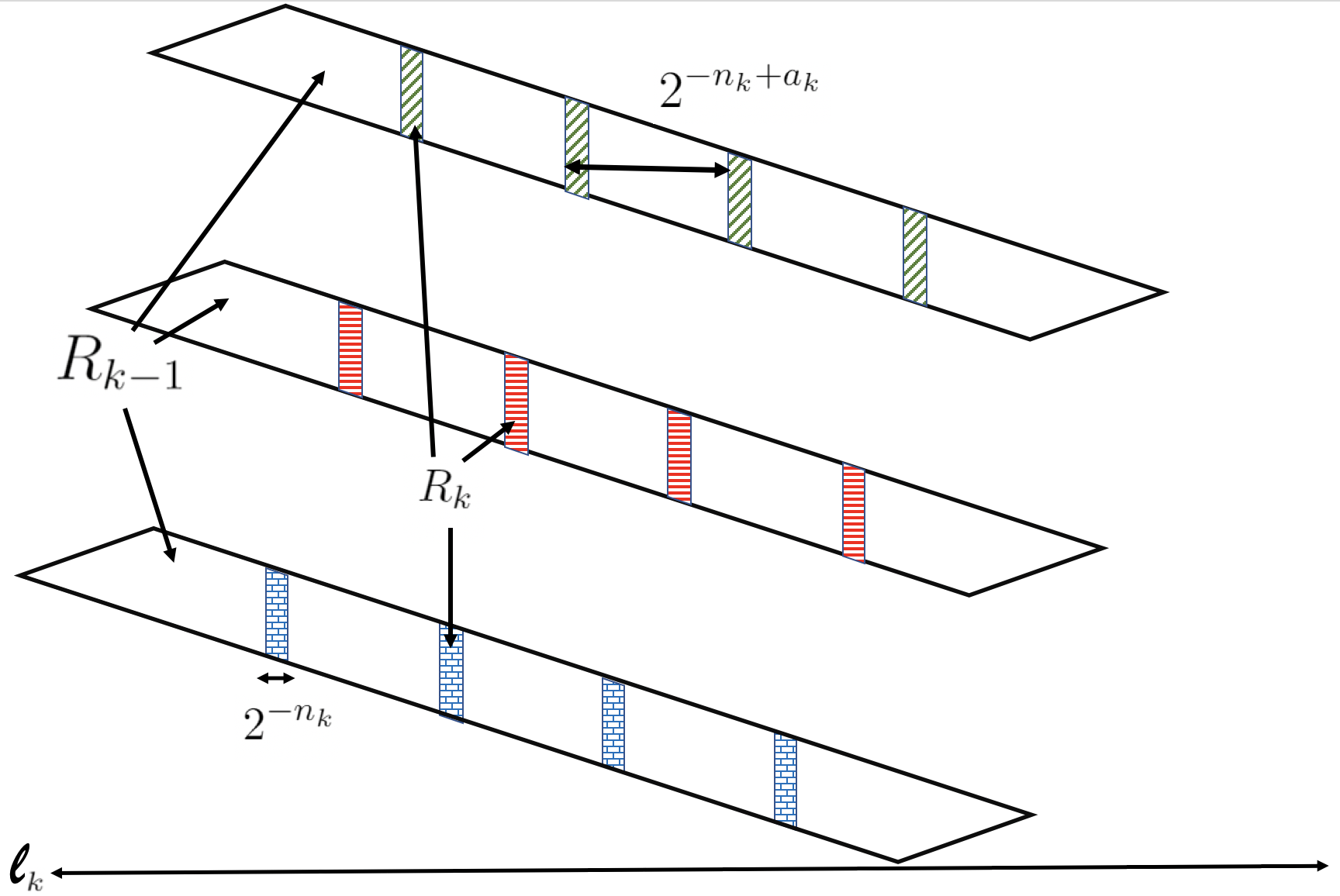}
  \caption{$R_{k}^{(j')}$ inside $R_{k-1}^{(j)}$ and associated distances.}
  \label{fig:subpipedsinlargerpipeds}
\end{figure}
Injective projection onto $\ell_k$ is a major desired feature of $\Gamma$.
A natural way to guarantee such injectivity is to require the parallelotopes $\smash{R_{k}^{(j')}}$ 
to be contained in preimages, under the projection $\pi_{\ell_k}:\R^d\to\ell_k$, of carefully chosen disjoint intervals in $\ell_k$. These intervals 
were first defined in 
\eqref{eqn:Idefn}
and are each of width $2^{-n_k}$.

To motivate the choice of these intervals, we look ahead to our goal: to bound the Hausdorff dimension of $\Gamma$ from below. 
For this estimate, it will be necessary to place an a lower bound on the distance between two $\smash{R_{k}^{(j')}}$ in $\Gamma_{k}$. 
If, inside a particular $\smash{R_{k-1}^{(j)}}$, we place the new $\smash{R_{k}^{(j')}}$ sufficiently close together, then the distance between the $\smash{R_{k-1}^{(j)}}$ in $\Gamma_{k-1}$ will be very large compared to the distance between $\smash{R_{k}^{(j')}}$ in $\Gamma_{k}$. 
This will ensure that the minimal distance between two parallelotopes in $\Gamma_{k}$ will be achieved only when the pair of polytopes originates from the same parallelotope $\smash{R_{k-1}^{(j)}}$ in $\Gamma_{k-1}$.
This is illustrated in Figure \ref{fig:depictionofintervals}, where the distance between $\smash{R_{k}^{(j')}}$ in different $\smash{R_{k-1}^{(j)}}$ is much larger than the distance between those in the same $\smash{R_{k-1}^{(j)}}$.

Our construction defined an offset of $2^{-n_{k}+a_{k}}$ from the start of one $\smash{R_{k}^{(j')}}$ to the next.
This is a large multiple of the width (measured in distance between opposite faces) of a single parallelotope $\smash{R_{k}^{(j')}}$, so that the distance between two $\smash{R_{k}^{(j')}}$ within the same $\smash{R_{k-1}^{(j)}}$ is at least $2^{-n_{k}+a_{k}}-2^{-n_k}$.
\begin{figure}[!ht]
  \centering
  \includegraphics[width=0.8\textwidth]{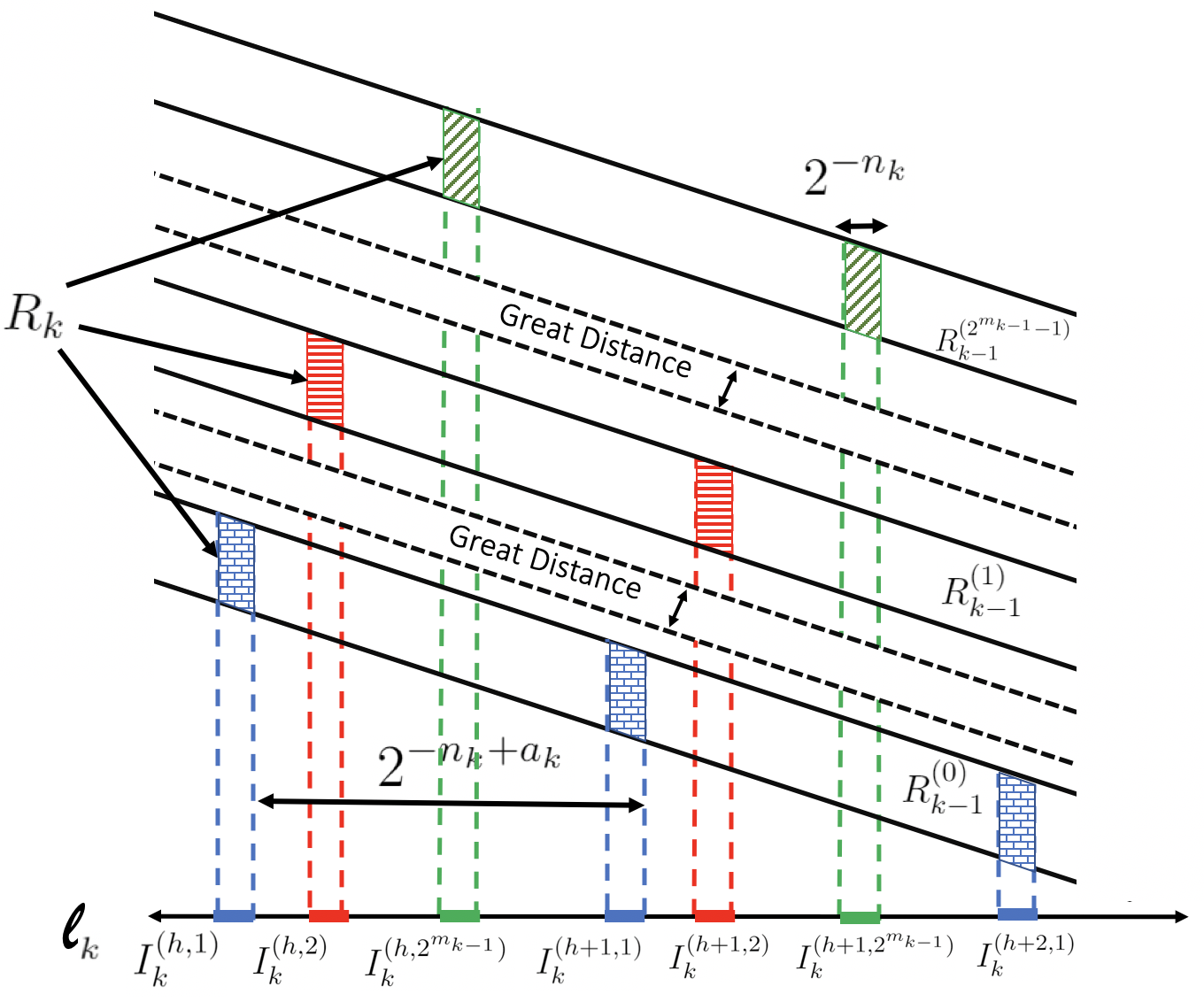}
  \caption{Illustration of the intervals defined in (\ref{eqn:Idefn})}
  \label{fig:depictionofintervals}
\end{figure}
Recall the definition of $I_k^{(h,j)}$ from Equation \eqref{eqn:Idefn}. For fixed $j$, the index $h$ determines an offset of $2^{-n_{k}+a_{k}}$.
In Figure \ref{fig:depictionofintervals}, these intervals are depicted as monochromatic.
Next, we observe that the coefficient on $j$ is small relative to the coefficient on $h$.
Hence, for a fixed $h$ we have that $j$ shifts the interval by a very small distance: in particular, twice the width of a single $\smash{R_{k}^{(j')}}$. Our later constraints on the sequences $a_k$ and $n_k$ will imply that these intervals are disjoint as illustrated. Intervals of the same color in Figure \ref{fig:depictionofintervals} will be disjoint by definition, coming from the same $R_{k-1}^{(j)}$, and we will, in Lemma \ref{lem:the_BETTER_sequences_exist}, force intervals of different colors to be disjoint by choosing $a_k$ large enough. 

From a fixed $\smash{R_{k-1}^{(j)}}$ we took as many parallelotopes as this separating distance will allow. The specification that we take only parallelotopes is necessary because it will happen that some $\smash{\pi_{\ell}^{-1}(I_{k}^{(h,j)})}$ intersects the  parallelotope $\smash{R_{k-1}^{(j)}}$ in one of its corners, or more generally any pair of adjacent sides, and in this case the intersection is not a true parallelotope.
In Lemma \ref{lem:mkcomputation}, we show that such discarded sets are negligible so long as we take $n_{k}$ to grow sufficiently fast.

\subsection{Estimating $m_k$}

\label{sec:geolemmas}


As is apparent from Definition \ref{def:long_winded_Gamma}, the construction of $\Gamma$ is completely determined by 
the sequences $(n_k),(a_k),$ and $(\ell_k)$. 
In particular, in order to calculate
the size of $\Gamma$ and its projections, we 
need good estimates on $m_k$ in terms of the 
given sequences $(n_k)$ and $(a_k)$.

Consider the projection of $R_{k-1}^{(j)}$ to $\ell_k$, and recall that in (\ref{eqn:Gammakdefn}) we must discard those sets where the preimage of this projection is in a ``corner'' of $\smash{R_{k-1}^{(j)}}$.
In other words, we would like to estimate the length of the 
interval $I$ for which 
$\pi_{\ell_k}^{-1}(I_{k}^{(h,j)})\cap R_{{k}-1}^{(j)}$
is a parallelotope if and only if $I_{k}^{(h,j)}\subset I$.
The following lemma gives the estimate we need.

\begin{lemma}\label{l:intervallength}
Let $(\ell_k)$ be the sequence of lines given by Lemma \ref{lem:better_sequence_of_lines}, 
let $(a_k)$ and $(n_k)$ be positive real sequences, and let $\Gamma=\Gamma\big((\ell_k),(a_k),(n_k)\big)$ be as in Definition \ref{def:long_winded_Gamma}.

There exist real numbers $\alpha_{d+1},\alpha_{d+2},\ldots$
and $\beta_1,\beta_2,\ldots$ that depend only on the sequence
$\ell_1, \ell_2,\ldots$ such that 
for any $k>d$, under the assumption
\begin{equation} \label{e:assumption}
n_i \ge n_{k-d}+\alpha_k + \beta_i 
\qquad
(i=k-d+1,\ldots,k-1),
\end{equation}
the following holds.

For each $j=1,\ldots,2^{m_{k-1}}$ there 
exists a nonempty interval $I=I(j,k)$ of length
\begin{equation}\label{e:Isize}
2^{-n_{k-d}-\alpha_k-1} \le |I| \le 2^{-n_{k-d}-\alpha_k}   
\end{equation}
such that for every $h\in\Z$ the
set $\pi_{\ell_k}^{-1}(I_{k}^{(h,j)})\cap R_{{k}-1}^{(j)}$
is a parallelotope if and only if $I_{k}^{(h,j)}\subset I$.
\end{lemma}
\begin{proof}
Fix $k$ and $j$ and let $R=R_{{k}-1}^{(j)}$.
Let $I$ be the set of those real numbers $t$ for which 
the hyperplane $\pi_{\ell_k}^{-1}(\{t\})$ is between two 
opposite faces of the parallelotope $R$.
Note that then indeed
\smash{$\pi_{\ell_k}^{-1}(I_{k}^{(h,j)})\cap R$}
is a parallelotope if and only if \smash{$I_{k}^{(h,j)}\subset I$.}
By definition $I$ is nonempty if $R$ has two opposite faces
such that their orthogonal projections to $\ell_k$
are disjoint and clearly the length of $I$ is the distance
between these projections.

Note that by construction $R$ has $d$ pairs of opposite faces
$(F_{k-d},F'_{k-d}),\ldots,$ $(F_{k-1},F'_{k-1})$ such that
for every $i=k-d,\ldots,k-1$ the faces $F_i$ and $F'_i$ are
perpendicular to $\ell_i$ and the distance between the 
hyperplanes containing $F_i$ and $F'_i$ is $2^{-n_i}$.

\begin{figure}[!ht] 
  \centering
  \includegraphics[width=0.8\textwidth]{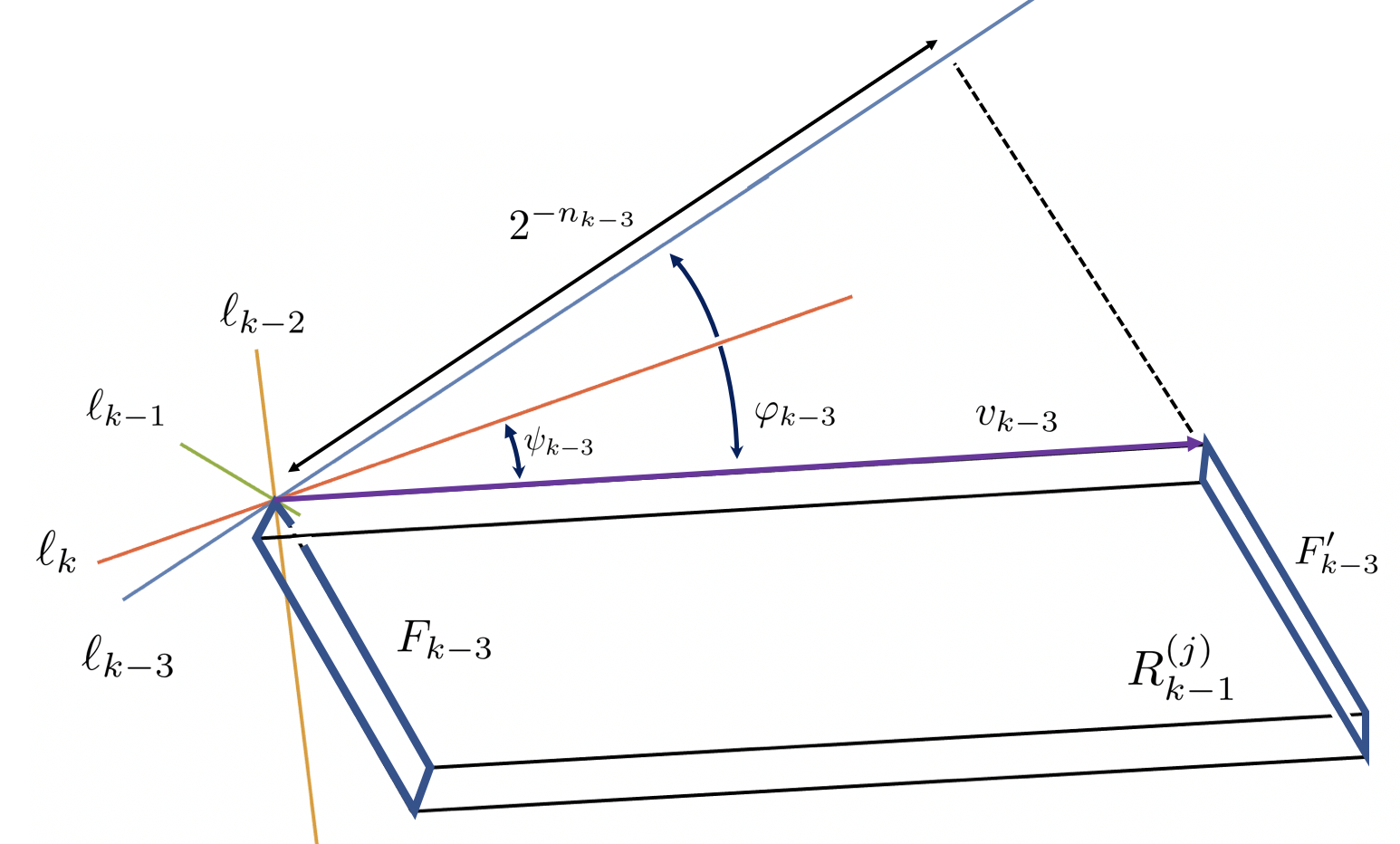}
  \caption{Illustration of the geometric ideas used in Lemma \ref{l:intervallength}. In blue, yellow, and green we see the lines which define the shape of the parallelotope $R_{k-1}^{(j)}$. Each of these lines corresponds to a pair of faces. In the figure the faces corresponding to $\ell_{k-3}$ have been boldened and the vector $v_{k-3}$ and angle $\phi_{k-3}$ drawn. The fourth line $\ell_k$ is the ``target'' onto which we will be projecting in the present step, so the angles $\psi$ inform the projection of $R_{k-1}^{(j)}$ onto this line.}
  \label{fig:3dptope}
\end{figure}

For each $i=k-d,\ldots,k-1$ let $v_i$ be the vector such that
$F_i + v_i = F'_i$. 
Then $v_i$ is parallel to all faces of $R$
but $F_i$ and $F'_i$, which implies that $v_i$ is perpendicular
to every line $\ell_{k-d},\ldots,\ell_{k-1}$ but $\ell_i$.
Let $\phi_i$ be the angle between $v_i$ and $\ell_i$
and let $\psi_i$ be the angle between $v_i$ and $\ell_k$.

Note that 
\begin{equation}\label{e:cosphi}
    |v_i|=\frac{2^{-n_i}}{\cos\phi_i}.
\end{equation}
Since by (2) of Lemma~\ref{lem:better_sequence_of_lines}
the directions of $\ell_{k-d},\ldots,\ell_{k-1}$ are 
linearly independent, the directions of $v_{k-d},\ldots,v_{k-1}$
are determined by $\ell_{k-d},\ldots,\ell_{k-1}$, hence we 
obtain that the angles $\phi_i$ depend only on the sequence of
lines $\ell_1,\ell_2,\ldots$.
We claim that 
$\phi_i\neq \pi/2$ for any $i=k-d,\ldots,k-1$ and
$\psi_{k-d}\neq \pi/2$. 
Indeed,
$\phi_i = \pi/2$ would imply that
$\ell_{k-d},\ldots,\ell_{k-1}$ are all perpendicular to $v_i$
and $\psi_{k-d}= \pi/2$ would imply that
$\ell_{k-d+1},\ldots,\ell_{k-1}, \ell_k$ are all 
perpendicular to $v_{k-d}$, which in both cases
would contradict the linear independence assumption (2) of Lemma~\ref{lem:better_sequence_of_lines}.
The geometric setup is illustrated in Figure \ref{fig:3dptope}.

Since $|I|$ is the distance between $\pi_{\ell_k}(F_{k-d})$
and $\pi_{\ell_k}(F'_{k-d})$, provided this distance is positive,
we get that
\begin{equation}
\label{e:Iestimates}
|\pi_{\ell_k}(v_{k-d})|-\diam(\pi_{\ell_k}(F_{k-d})) 
\le |I| \le
|\pi_{\ell_k}(v_{k-d})|,
\end{equation}
provided the lower estimate is positive.
Define $\alpha_k$ and $\beta_i$ $(i=k-d+1,\ldots,k-1)$ by
\begin{equation}
2^{-\alpha_k}=\frac{\cos\psi_{k-d}}{\cos\phi_{k-d}}
\qquad \textrm{and} \qquad
2^{-\beta_i}=\frac{\cos \phi_i}{2(d-1)}.
\label{eqn:alpha_beta_defn}
\end{equation}
These numbers are well defined since none of
these angles can be $\pi/2$ and they depend only on the 
sequence of lines
$\ell_1,\ell_2,\ldots$ and their own indices.
Note that 
\[
|\pi_{\ell_k}(v_{k-d})|=|v_{k-d}|\cos\psi_{k-d}=
\frac{2^{-n_{k-d}}}{\cos\phi_{k-d}}\cos\psi_{k-d}=
2^{-n_{k-d}-\alpha_k},
\]
which gives the upper estimate of \eqref{e:Isize} via \eqref{e:Iestimates}.
By this and \eqref{e:Iestimates}, to get the lower estimate 
in \eqref{e:Isize} it is enough to show that
\begin{equation}\label{e:enough}
 \diam(\pi_{\ell_k}(F_{k-d})) \le 2^{-n_{k-d}-\alpha_k-1}.   
\end{equation}
Using that projection cannot increase the distance,
the edges of $F_{k-d}$ have lengths $|v_{k-d+1}|,\ldots,|v_{k-1}|$,
and \eqref{e:cosphi},
we obtain
\[
\diam(\pi_{\ell_k}(F_{k-d})) \le 
\diam(F_{k-d}) \le
\sum_{i=k-d+1}^{k-1} |v_i| =
\sum_{i=k-d+1}^{k-1}\frac{2^{-n_i}}{\cos\phi_i}. 
\]
On the other hand the assumption \eqref{e:assumption}
and the definition of $\beta_i$ gives
\[
\frac{2^{-n_i}}{\cos\phi_i} \le
\frac{2^{-n_{k-d}-\alpha_k-\beta_i}}{\cos\phi_i} =
\frac{2^{n_{k-d}-\alpha_k}}{2(d-1)}.
\]
Combining these 
we get \eqref{e:enough},
which completes the proof.
\end{proof}

The following lemma contains our requirements about the 
sequences $(a_k)$ and $(n_k)$ in the construction.

\begin{lemma}\label{lem:the_BETTER_sequences_exist}
Fix $t\in[0,1]$. 
Let $(\ell_k)$ be the sequence of lines given by Lemma \ref{lem:better_sequence_of_lines}, 
and let
$\alpha_{d+1},\alpha_{d+2},\ldots$
and $\beta_1,\beta_2,\ldots$ 
be the sequences given by Lemma~\ref{l:intervallength}.

There exist positive real sequences $(a_k)$ and $(n_k)$ 
such that for every $k>d$,
all of the following conditions hold:
\begin{enumerate}[I.]
    \item\label{item:prev_lemma_assumption}
    $n_i \ge n_{k-d}+\alpha_k + \beta_i \qquad
    (i=k-d+1,\ldots,k-1)$
    \item \label{item:limiting_diam_assumption}
    $n_k \geq \beta_k + k$
    \item\label{item:mk_estimating1} $n_k-a_k\ge n_{k-d}+\A_k+1$
    \item\label{item:dek_ALL_CASES} $n_k \ge 2k\cdot(2n_{k-1}+a_{k-1}+n_{k-d}+|\alpha_k|+2)$
    \item\label{item:ek_ALL_CASES} $n_k\ge 4k\cdot\big(2n_{k-1}+a_{k-1}+n_{k-d}+\A_k+\A_{k+d}+3d+2+\sum_{j=1}^{d-1}(n_{k-d+j}+\A_{k+j})\big)$
    \item\label{item:ek'_condition} $n_k\ge 4k\cdot(\A_{k+d}+2)$
    \item\label{item:inj_condition} $a_k \ge 2n_{k-1}-a_{k-1}+2$
    \item\label{item:option_2} $n_k \ge 2k^2$
    \item\label{item:furnishes_MDP_cases} $n_{k+1}-a_{k+1}\ge n_k$
    \item\label{item:case1_MDP} $(\log_2 n_k)/n_k\le 1/(4k)$
    \item\label{item:limit_ak/nk} $\lim_{k\to\infty}a_k/n_k =1-t$ 
    \item\label{item:pm_limit} Consider the following statements: 
        \begin{enumerate}
            \item[(1)] For large enough $k$, we have $1-t\ge \frac{a_k}{n_k}+\frac{1}{k}.$
            \item[(2)] For large enough $k$, we have $1-t\le\frac{a_k}{n_k}-\frac{1}{k}.$
        \end{enumerate}
    If $t=0$, then (1) holds; if $t=1$, then (2) holds; and if $t\in(0,1)$, then we can choose either of (1) or (2) to hold.
\end{enumerate}

\end{lemma}

\begin{remark}
In Lemma \ref{lem:the_BETTER_sequences_exist}, the choice of imposing growth condition (1) or (2) on $a_k/n_k$ corresponds to the choice of alternative (1) or (2) in Theorem \ref{thm:linesubspaces}.
We will justify this correspondence in Section \ref{sec:dimensioncomp}.
\end{remark}
\bpf[Proof of Lemma \ref{lem:the_BETTER_sequences_exist}]
For every $k$, let us initially set 
\begin{equation}\label{eq:initial_seq_defn}
\nu'_k = 1-t\pm \frac{1}{k},\qquad 
\nu_k = \begin{cases}
\nu'_k & \textrm{if }\nu'_k\in(0,1)\\
1/2 & \textrm{otherwise},
\end{cases}
\qquad \textrm{and} \qquad
a_k=\nu_k\cdot n_k,
\end{equation}
where we choose $+1/k$ if we want (2) in property \ref{item:pm_limit} and $-1/k$ if we want (1) in property \ref{item:pm_limit}. 
This choice satisfies properties \ref{item:limit_ak/nk} and \ref{item:pm_limit} for any sequence $n_k$. We will now show that there is a choice of $n_k$ which increases rapidly enough to satisfy the remaining properties.
Property \ref{item:case1_MDP} is satisfied 
for any sufficiently large $n_k$, since $\lim_{t\to\infty}t\inv\log_2t=0$, 
and similarly for property~\ref{item:option_2}.

By algebraically substituting 
every $a_i$ by $\nu_i\cdot n_i$ and using that
$0<\nu_i<1$, we can see that all properties \ref{item:limiting_diam_assumption}
through \ref{item:furnishes_MDP_cases} are of the form
$c\cdot n_i \ge f(n_1,\ldots,n_{i-1})$, where the
constant $c>0$ and 
the function $f$ depend only on $i,d$ and the sequence of
lines $\ell_1,\ell_2,\ldots$; also, property \ref{item:prev_lemma_assumption} 
consists of finitely many inequalities of this form.
Therefore by induction all of these properties 
can be satisfied.
%
\epf

Now that we can estimate the width of the valuable space inside the projection of $\smash{R_{k-1}^{(j)}}$ to $\ell_k$, 
we estimate the number of intervals that can fit into the projection of a single $\smash{R_{k-1}^{(j)}}$.
This allows us to effectively estimate the quantity of new parallelotopes $\smash{R_{k}^{(j')}}$ born from a single $\smash{R_{k-1}^{(j)}}$, which in turn allows us to estimate the number of parallelotopes $\smash{R_{k}^{(j')}}$ inside $\Ga_k$.


\begin{lemma}\label{lem:mkcomputation}
Fix $t\in[0,1]$.
Let $(a_k)$ and $(n_k)$ be real sequences given by Lemma \ref{lem:the_BETTER_sequences_exist}, 
let $(\ell_k)$ be the sequence of lines given by Lemma \ref{lem:better_sequence_of_lines}, 
and let $(m_k)$ be the corresponding real sequences provided in Definition \ref{def:long_winded_Gamma}.
For $k>d$, let $\alpha_k$ be as in Lemma \ref{lem:the_BETTER_sequences_exist}.
Then, 
for some $\tilde\A_{k}\in[\alpha_k-1,\alpha_k+2]$, we have
\begin{equation}\label{eq:mkcomputation}
    m_k=n_k-a_k+m_{k-1}-n_{k-d}-\tilde\A_{k}.
\end{equation}

In particular, there exist real sequences $(\de_k)_{k\ge d+1}$, $(\e_k)_{k\ge d+1}$, and $(\e_k')_{k\ge d+1}$ with $|\de_k|,|\e_k|,|\e_k'|\le 1/(2k)$ such that the following hold for $k> d$:
\bnum
    \item[i)] We have $m_k=(1+\de_k)n_k-a_k$.
    \item[ii)] We have $-m_{k+d}+\sum_{j=1}^d (n_{k+j}-a_{k+j}) +d = a_k + \e_k n_k$.
    \item[iii)] We have $-m_{k+d-1}+\sum_{j=1}^{d-1}( n_{k+j}-a_{k+j}) +d = a_k + (-1+\e'_k) n_k$.
\enum
\end{lemma}
\begin{proof}
Let $k>d$. 
We estimate $m_k$, which we recall is 
entirely determined 
by our inductive definition (\ref{eqn:Gammakdefn}).

Fix $j\in \{1,\ldots,2^{m_{k-1}}\}$ and let 
$r_{k-1}^j$ be the quantity of $\smash{R_k^{(j')}}$ inside $\smash{R_{k-1}^{(j)}}$.
By Property \ref{item:prev_lemma_assumption} of Lemma~\ref{lem:the_BETTER_sequences_exist}, the assumption
\eqref{e:assumption} of Lemma~\ref{l:intervallength}
holds, so the conclusion holds as well. 
Let $I$ be the interval Lemma~\ref{l:intervallength} gives.
By construction, we have
\[
\left\lfloor\frac{|I|}{2^{-n_k+a_k}}\right\rfloor \le 
r_{k-1}^j \le 
\left\lceil\frac{|I|}{2^{-n_k+a_k}}\right\rceil.
\]
Combining this with \eqref{e:Isize} of Lemma~\ref{l:intervallength} we obtain 
\begin{equation*}
\left\lfloor 2^{n_k-a_k-n_{k-d}-\alpha_k-1}\right\rfloor \le 
r_{k-1}^j \le 
\left\lceil 2^{n_k-a_k-n_{k-d}-\alpha_k}\right\rceil.
\end{equation*}
Using \eqref{item:mk_estimating1} of Lemma~\ref{lem:the_BETTER_sequences_exist} and
the fact that $x/2\le \lfloor x \rfloor$ 
and $\lceil x \rceil \le 2x$
for any $x\ge 1$, this gives
\begin{equation*}\label{e:r-estimate}
2^{n_k-a_k-n_{k-d}-\alpha_k-2} \le 
r_{k-1}^j \le 
2^{n_k-a_k-n_{k-d}-\alpha_k+1}. 
\end{equation*}
Since by definition 
$2^{m_k}=\sum_{j=1}^{2^{m_{k-1}}}r_{k-1}^j $
this implies that
\[
m_{k-1}+n_k-a_k-n_{k-d}-\alpha_k-2 \le 
m_k \le 
m_{k-1}+n_k-a_k-n_{k-d}-\alpha_k+1, 
\]
which completes the proof of the first paragraph of the lemma.

Now we verify i).
If we define 
\begin{equation}\label{eq:dek_defn}
\de_k:=(m_{k-1}-n_{k-d}-\tilde\A_k)/n_k,
\end{equation}
then an elementary calculation verifies that $m_k=(1+\de_k)n_k-a_k$, so it remains to show that $|\de_k|\le 1/(2k)$.
We show this by induction on $k$.
As (by Definition~\ref{def:long_winded_Gamma}) $m_d=0$, the base case $|\de_{d+1}|\le 1/2(d+1)$ is equivalent to $n_{d+1}\ge 2(d+1)\cdot (n_1+\tilde\A_{d+1})$, which is implied by property \ref{item:dek_ALL_CASES} in Lemma \ref{lem:the_BETTER_sequences_exist} for $k=d+1$.
Now let $k\ge d+2$, and assume that $|\de_{k-1}|\le 1/2(k-1).$ 
By definition, 
$|\de_k|
\le 
(m_{k-1}+n_{k-d}+|\tilde\A_k|)/n_k$.
Using i) for $k-1$, $\tilde\A_k\in[\alpha_k-1,\alpha_k+2]$
and $|\delta_{k-1}|<1$, we obtain
\[|\de_k|
\le\frac{(1+\de_{k-1})n_{k-1}-a_{k-1}+n_{k-d}+|\A_k|+2}{n_k}
\le \frac{2n_{k-1}-a_{k-1}+n_{k-d}+|\A_k|+2}{n_k},
\]
which is at most $1/(2k)$ by property \ref{item:dek_ALL_CASES} of Lemma \ref{lem:the_BETTER_sequences_exist}.
%
This proves i).

Next we verify ii).
Towards this, we define
\begin{equation}\label{eq:ek_defn}
\e_k:=\frac{-m_{k-1}+n_{k-d}+\tilde\A_k+\tilde\A_{k+d}+d+\sum_{j=1}^{d-1}(n_{k-d+j}+\tilde\A_{k+j})}{n_k}.
\end{equation}
Using telescopic sums, we can compute using (\ref{eq:mkcomputation}) that ii) holds with this choice of $\e_k$, so it remains to show that $|\e_k|\le 1/(2k)$.
If, using i), we replace $m_{k-1}$ with $(1+\de_{k-1})n_{k-1}-a_{k-1}$ in (\ref{eq:ek_defn}), then 
take the modulus of each term and use that $|\tilde\A_i|\le|\A_i|+2$ for $i>d$, 
then property \ref{item:ek_ALL_CASES} in Lemma \ref{lem:the_BETTER_sequences_exist} gives
that $|\e_k|\le 1/(4k)\le 1/(2k)$.

Lastly we verify iii).
Noting the similarity to ii), if we define
\begin{equation}\label{eq:ek'_defn}
\e_k':=\e_k-\frac{\tilde\A_{k+d}}{n_k},
\end{equation}
then,
using \eqref{eq:mkcomputation} for $k+d$ instead of $k$,
a straightforward calculation shows that iii) holds with this choice of $\e_k'$.
Note that in the previous paragraph we proved 
the stronger estimate $|\e_k|\le 1/(4k)$.
Therefore, $|\e_k'|\le 1/(2k)$ follows from the triangle inequality applied to (\ref{eq:ek'_defn}), in conjunction with property \ref{item:ek'_condition} of Lemma \ref{lem:the_BETTER_sequences_exist}.
\end{proof}


\subsection{Injectivity of $\pi_{\ell_k}:\Gamma\to{\ell_k}$}

\begin{lemma}\label{lem:intervals_are_disjoint}
Let $(\ell_k)$ be the sequence of lines given by Lemma \ref{lem:better_sequence_of_lines}, let $(a_k)$ and $(n_k)$ be the real sequences given by Lemma \ref{lem:the_BETTER_sequences_exist}, and let $\Gamma=\Gamma((\ell_k),(a_k),(n_k))$ be the corresponding set.
Then, for fixed large enough $k$, the intervals $I_{k}^{(h,j)}$ defined in \eqref{eqn:Idefn} are disjoint for all distinct pairs $(h,j)$ with $h \in \Z$, $1 \leq j \leq 2^{m_{k-1}}$.
\end{lemma}
\begin{proof}
We consider two intervals $\smash{I_k^{(h,j)}}$ and $\smash{I_k^{(h',j')}}$.
If $h<h'$, then the distance between the left endpoint of $\smash{I_k^{(h',j')}}$ and the right endpoint of $\smash{I_k^{(h,j)}}$ is
\begin{align*}
(h'-h)2^{-n_k+a_k}+(j'-j)2^{-n_k+1}-2^{-n_k}
&\geq 2^{-n_k+a_k}-2^{-n_k+1+m_{k-1}}-2^{-n_k}.
\end{align*}
Next, property \ref{item:inj_condition} of Lemma \ref{lem:the_BETTER_sequences_exist}, in conjunction with Lemma \ref{lem:mkcomputation} i), imply that that $a_k > 2+m_{k-1}$ for $k$ large enough that $|\delta_{k-1}|<1$.
Therefore the above distance is at least
\begin{align*}
2^{-n_k+2+m_{k-1}}-2^{-n_k+1+m_{k-1}}-2^{-n_k}
\geq 2^{-n_k+1}-2^{-n_k}
= 2^{-n_k}.
\end{align*}
With a positive separating distance, the intervals are disjoint. 
On the other hand, if $h=h'$ then we may assume $j'>j$, so the distance between the left endpoint of $\smash{I_k^{(h',j')}}$ and the right endpoint of $\smash{I_k^{(h,j)}}$ is
\begin{align*}
(j'-j)2^{-n_k+1}-2^{-n_k}
&\ge 2^{-n_k+1}-2^{-n_k}
= 2^{-n_k},
\end{align*}
hence the intervals are disjoint in this case as well.
\end{proof}
Having proved that the intervals $\smash{I_k^{(h,j)}}$ are disjoint, we may proceed to injectivity.
\begin{lemma}\label{lem:disjoint_projection}
The map $\pi_{\ell_k}:\Gamma\to\ell_k$ is injective.
\end{lemma}
\begin{proof}
Suppose we have two points $x,y \in \Gamma$ with $\pi_{\ell_k}(x) = \pi_{\ell_k}(y)$, and take the subsequence $\{\ell_{k_i}\}$ which is identically $\ell_k$ (here we use property \ref{item:infinite_lines} of Lemma \ref{lem:better_sequence_of_lines}). 
Then the point $\pi_{\ell_k}(x) = \pi_{\ell_k}(y)$ on $\ell_k$ is contained in a sequence of intervals $\smash{I_{k_i}^{(h_i,j_i)}}$. 
Since we proved above that these intervals are disjoint for large enough $k_i$, the choice of $(h_i, j_i)$ is unique. 
Then we have $x, y \in \pi_{\ell_k}^{-1}(I_{k_i}^{(h_i,j_i)})\cap R_{{k}-1}^{(j_i)}$ since (by construction of $\Gamma_{k_i}$) this is the only parallelotope whose image under $\pi_{\ell_k}$ is $\smash{I_{k_i}^{(h_i,j_i)}}$.
Finally we check that
\[
\lim_{k \to \infty}\diam(R_{k}^{j}) = 0.
\]
First, recognize that $\diam(R_{k}^{j})$ is constant across $j$ by construction. Recall those vectors $v_i$ and angles $\phi_i$ used in the proof of Lemma \ref{l:intervallength} and the equations \eqref{e:cosphi} and \eqref{eqn:alpha_beta_defn} relating them to each other and the constants $\beta_i$. Then using property \ref{item:limiting_diam_assumption} of Lemma \ref{lem:the_BETTER_sequences_exist} and that $d \geq 2$ we have
\[\diam(R_{k}^{j}) \le
\sum_{i=k-d+1}^{k} |v_i| =
\sum_{i=k-d+1}^{k}\frac{2^{-n_i}}{\cos\phi_i}=
\sum_{i=k-d+1}^{k}\frac{2^{-n_i+\beta_i}}{2(d-1)}\le
d\frac{2^{-k}}{2(d-1)}\le
2^{-k}
 \]
which indeed tends to $0$ as $k \to \infty$. Showing that the diameters of the paralleletopes tends to $0$ is enough to finish the proof because then $x=y$, so $\pi_{\ell_k}$ is indeed injective.
\end{proof}

\section{Dimension and measure computations for $\Gamma$}\label{sec:dimensioncomp}

Fix a line $\ell\in\sL$.
In this section we prove the three estimates $\dim \pi_\ell(\Gamma)\leq t$, $\dim \Gamma\leq d-1+\dim\pi_\ell(\Gamma)$, and $\dim\Gamma\geq d-1+t$ in Subsections \ref{subsec:ubdimproj}, \ref{subsec:UBdimGamma}, and \ref{subsec:MDP} through \ref{subsec:MDPcase2}, respectively.
Together these clearly imply the first paragraph of Theorem \ref{thm:linesubspaces}.

Additionally, we show in Subsections \ref{subsec:MDPcase1} and \ref{subsec:MDPcase2} that for $t\in[0,1)$, the set $\Gamma$ has positive $(d-1+t)$-capacity provided $a_k/n_k$ satisfies the following estimate for sufficiently large $k$,
\begin{equation}\label{eq:MDPconvergencecondition}
    1-t=\lim_{i\to\infty}\frac{a_i}{n_i}\geq \frac{a_k}{n_k}+\frac{1}{k};
\end{equation}
this is option (1) in Theorem \ref{thm:linesubspaces}, as well as property \ref{item:pm_limit} of Lemma \ref{lem:the_BETTER_sequences_exist}.
Separately, we will argue in Subsection \ref{subsec:thm1option2} that for $t\in(0,1]$, the $t$-dimensional Hausdorff measure of $\pi_\ell(\Gamma)$ is zero provided that for sufficiently large $k$, the ratio $a_k/n_k$ satisfies the following inequality,
\begin{equation}\label{eq:measureconvergencecondition}
    1-t=\lim_{i\to\infty}\frac{a_{i}}{n_{i}}
\leq\frac{a_k}{n_k}-\frac{1}{k},
\end{equation}
which is option (2) in Theorem \ref{thm:linesubspaces}, as well as property \ref{item:pm_limit} of Lemma \ref{lem:the_BETTER_sequences_exist}.
Observe that these conditions are not compatible, hence for $t\in(0,1)$ we cannot guarantee both (1) and (2) in Theorem \ref{thm:linesubspaces}.



\subsection{The upper bound $\dim\pi_\ell(\Gamma)\leq t$}\label{subsec:ubdimproj}

It suffices to construct a sequence of finite covers $\set{U_i}$ for $\pi_\ell(\Gamma)$ such that for every $\e,\e'>0$, for sufficiently large $i$, we have
\[\sum_j\paren{\diam\, U_i^{(j)}}^{t+\e}<\e'.\]
We examine the natural sequence of coverings generated by our construction.
Namely, there exists a subsequence $\{\ell_{k_i}\}$ of $\{\ell_k\}$ which is identically $\ell$, and as defined previously, the projection of $\smash{R_{k_i}=\cup_jR^{(j)}_{k_i}}$ into $\ell$ consists of $2^{m_{k_i}}$ intervals of width $2^{-n_{k_i}}$.
Accordingly, we define the cover $\smash{\{U_i^{(j)}:j=1,\dots,2^{m_{k_i}}\}}$ to be the collection of these intervals.

It follows that the above sum is $2^{m_{k_i}-n_{k_i}(t+\e)}$.
By Lemma \ref{lem:the_BETTER_sequences_exist} property \ref{item:limit_ak/nk} and Lemma \ref{lem:mkcomputation} i) we see as $i \to \infty$,
\begin{align*}\label{eq:projuptoshow1}
    \lim_{i \to \infty}\left[ m_{k_i}-n_{k_i}(t+\varepsilon)\right]&=\lim_{i \to \infty}\left[\paren{\lim_{j\to\infty}\frac{a_{k_j}}{n_{k_j}}}-\frac{a_{k_i}}{n_{k_i}}+\delta_{k_i}-\varepsilon\right]n_{k_i}=-\infty
\end{align*}
as needed, since $\delta_{k_i}\to0$ and $n_{k_i}\to\infty$. 


\subsection{Option (2) in Theorem \ref{thm:linesubspaces}}\label{subsec:thm1option2}
Here we verify that for $t\in(0,1]$, if we assume (\ref{eq:measureconvergencecondition}), then we have $H^t(\pi_\ell(\Gamma))=0$. Utilizing the same sequence of covers $U_i$ defined above, we compute that
\[\sum_j\paren{\diam\, U_i^{(j)}}^{t}=2^{m_{k_i}-n_{k_i}t}.\]
Applying Lemma \ref{lem:the_BETTER_sequences_exist} property \ref{item:limit_ak/nk} and Lemma \ref{lem:mkcomputation} i) as above, as well as (\ref{eq:measureconvergencecondition}), we see that

\begin{align*}\label{eq:projuptoshow1}
    \lim_{i \to \infty} m_{k_i}-n_{k_i} t = \lim_{i \to \infty}\left[\paren{\lim_{j\to\infty}\frac{a_{k_j}}{n_{k_j}}}-\frac{a_{k_i}}{n_{k_i}}+\delta_{k_i}\right]n_{k_i}\leq \lim_{i \to \infty}\left[-\frac{1}{k_i}+\delta_{k_i}\right]n_{k_i}=-\infty,
\end{align*}
since $|\delta_{k_i}|\le 1/2k_i$ by Lemma \ref{lem:mkcomputation}, and because $n_{k_i}\ge 2k_i^2$ by property \ref{item:option_2} of Lemma \ref{lem:the_BETTER_sequences_exist}.
Hence the $t$-dimensional Hausdorff measure of $\pi_\ell(\Gamma)$ is $0$, provided (\ref{eq:measureconvergencecondition}) holds.

\subsection{The upper bound $\dim\Gamma\leq d-1+\dim\pi_\ell(\Gamma)$}\label{subsec:UBdimGamma}

This follows from the observation that $\Gamma$ is contained in some isometric image of $\ell^\perp\times\pi_\ell(\Gamma)$.


\subsection{The setup for the lower bound on the size of $\Gamma$} 
\label{subsec:MDP}

To complete the proof of Theorem~\ref{thm:linesubspaces} it remains to prove $\dim\Gamma\ge d-1+t$ and, in order 
to get option (1), to show that if $t\in[0,1)$ and \eqref{eq:MDPconvergencecondition} holds
then $\Gamma$ has positive $(d-1+t)$-capacity and infinite $(d-1+t)$-dimensional Hausdorff measure. 

Towards this, we define a mass distribution on $\Gamma$ in the natural way, starting with unit mass for $\Gamma_0$, uniformly distributing the mass from each paralellepiped in $\Gamma_{k-1}$ into the smaller sub-parallelotopes in $\Gamma_{k}$, and letting $\mu$ be the limiting mass distribution.
Let $Q$ 
be a ball of diameter $2^{-q}$. 
By the mass distribution principle (see for example \cite[pp. 61]{falconer_fractal}), to prove that $\dim\Gamma\geq d-1+t$ it would suffice to 
show $\mu(Q) \le 2^{-qs}$ for every $s<d-1+t$.
In option (1) we also need capacity estimates, so to make the argument more consistent for the two situations, instead of the mass distribution principle we will apply (for both options) the following slightly stronger standard result, which we prove for completeness.

\begin{lemma}\label{lem:poscapacity}
If $s>0$ and $\mu$ is a finite Borel measure supported on a compact set $K$, and
\begin{equation}\label{eq:toshowforlemma}
    \mu(Q)\le \frac{2^{-qs}}{q^2} \quad \text{for any ball of diameter } 2^{-q} \text{ for large enough $q$}, 
\end{equation}
then the $s$-capacity of K is positive and $K$ has infinite $s$-dimensional Hausdorff dimension.
\end{lemma}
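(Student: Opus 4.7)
The plan is to prove both claims via the standard two-pronged argument for such refined mass distribution lemmas: the $s$-capacity bound comes from controlling the $s$-energy integral $\int\!\int |x-y|^{-s}\,d\mu(x)\,d\mu(y)$ via a dyadic decomposition, while the infinite Hausdorff measure comes from a strengthened mass distribution principle that leverages the extra $1/q^2$ factor in the hypothesis. The common ingredient is the trivial observation that any ball of radius $r \in [2^{-q-1}, 2^{-q})$ is contained in a ball of diameter at most $2^{-q+1}$, so the hypothesis gives $\mu(B(x,r)) \lesssim r^{s}/q^2$ uniformly for all sufficiently small $r$.

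For the capacity, I would fix a large threshold $q_0$ and, for each $x \in K$, decompose
\begin{equation*}
\int |x-y|^{-s}\,d\mu(y) \;\le\; 2^{q_0 s}\mu(K) \;+\; \sum_{q \ge q_0} \int_{2^{-q-1} \le |x-y| < 2^{-q}} |x-y|^{-s}\,d\mu(y),
\end{equation*}
and bound each annular integral by $2^{(q+1)s}\mu(B(x,2^{-q}))$. The hypothesis, applied to a ball of diameter $2^{-q+1}$ containing $B(x,2^{-q})$, yields an upper bound of the form $C/(q-1)^2$ on the summand, so the series converges. Integrating against $\mu$ then shows the $s$-energy of $\mu$ is finite, so $K$ has positive $s$-capacity.

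For the Hausdorff measure statement, let $\{U_i\}$ be any cover of $K$ with $\diam U_i < 2^{-q_0}$, and for each $i$ let $q_i$ be the unique integer with $2^{-q_i-1} < \diam U_i \le 2^{-q_i}$. Each $U_i$ sits inside a ball of diameter $2^{-q_i+1}$, so by hypothesis $\mu(U_i) \le 2^{-(q_i-1)s}/(q_i-1)^2 \le C\,(\diam U_i)^s/q_i^2$. Since every $q_i \ge q_0$, summing gives
\begin{equation*}
\mu(K) \;\le\; \sum_i \mu(U_i) \;\le\; \frac{C}{q_0^2}\sum_i (\diam U_i)^s,
\end{equation*}
so $\sum_i (\diam U_i)^s \ge \mu(K)\,q_0^2/C$, which tends to $\infty$ as $q_0 \to \infty$. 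Taking the infimum over covers and then the limit $\delta \to 0$ shows $\mathcal{H}^s(K) = \infty$.

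The proof is essentially bookkeeping, and I do not expect any real obstacle. The only thing one must be slightly careful about is matching radii, diameters, and the integer $q$ consistently (off-by-constant shifts are harmless because they only affect the constant $C$ above, not the convergence of $\sum 1/q^2$ or the blow-up of $q_0^2$). Finiteness of $\mu$ and compactness of $K$ are used only to handle the "large-scale" contribution to the energy integral, where $|x-y|$ is bounded below by $2^{-q_0}$.
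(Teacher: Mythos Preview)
Your proof is correct. The capacity argument is essentially the paper's: the paper writes the inner potential as $s\int_0^{\infty} r^{-s-1}\mu(B(x,r))\,dr$ (citing Mattila) and then plugs in the hypothesis, whereas you do the equivalent dyadic annular decomposition by hand; both yield a convergent $\sum 1/q^2$-type tail, uniformly in $x$.

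The one genuine difference is in the Hausdorff measure step. The paper does not argue directly: it simply invokes \cite[Theorem 8.7 (1)]{mattila_fractal}, which says that positive $s$-capacity already forces infinite $s$-dimensional Hausdorff measure, so once $C_s(K)>0$ is established the second conclusion is immediate. Your route instead extracts the Hausdorff statement directly from the hypothesis via a sharpened mass distribution argument, using the extra $1/q^2$ factor to push $\sum_i(\diam U_i)^s$ to infinity as the mesh shrinks. Your version is more self-contained and shows explicitly how the $1/q^2$ gain is what upgrades the usual conclusion $\mathcal{H}^s(K)>0$ to $\mathcal{H}^s(K)=\infty$; the paper's version is shorter but leans on an external reference.
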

\bpf
By the definition of $s$-capacity $C_s$ (see \cite{mattila_fractal}) in order to show $C_s(K)>0$ it is enough prove that $I_s(\mu)<\infty$, where
$I_s(\mu)=\int\int |x-y|^{-s} d\mu(y) d\mu(x)$ is the $s$-energy of $\mu$. As in \cite{mattila_fractal}, the inner integral can be rewritten as
$$
\int |x-y|^{-s} d\mu(y) = s\int_0^{\infty} r^{-s-1} \mu(B(x,r)) dr,
$$
where $B(x,r)$ denotes the ball centered at $x$ with radius $r$. 
Since $\mu$ is a finite measure, this shows that in order to prove that $I_s(\mu)$ is finite it is enough to prove that for some fixed $r_0$ and $C$ 
(not depending on $x$) we 
have
$$
\int_0^{r_0} r^{-s-1} \mu(B(x,r)) dr \le C.
$$
Applying the assumption of the lemma for $q=-\log_2(2r)$ and taking $r_0$ small enough, we get that $\mu(B(x,r))\le (2r)^s / (\log_2(2r))^2$ for $0<r<r_0$,
which implies that the above inequality indeed holds for some finite constant $C$, which does not depend on $x$. 


Finally, by \cite[Theorem 8.7 (1)]{mattila_fractal}, we have that if $K$ has positive $s$-capacity then it also has infinite $s$-dimensional Hausdorff dimension, as needed.
\epf

By the above lemma, it remains to show the following.
\begin{claim}\label{claim:final}
(i) If $t\in[0,1)$ and \eqref{eq:MDPconvergencecondition} holds then we have \eqref{eq:toshowforlemma} for $s=d-1+t$.
(ii) If $t\in(0,1]$ and we assume only $\lim a_i/n_i=1-t$ then \eqref{eq:toshowforlemma} holds for every $s\in [d-1,d-1+t)$.
\end{claim}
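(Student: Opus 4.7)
The plan is to bound $\mu(Q)$ for an arbitrary closed ball $Q$ of diameter $2^{-q}$ via the crude estimate
\[
\mu(Q)\;\le\;N_k(Q)\cdot 2^{-m_k},\qquad N_k(Q):=\#\{j:R_k^{(j)}\cap Q\neq\emptyset\},
\]
for a carefully chosen level $k=k(q)$, where both the counting $N_k(Q)$ and the identity for $m_k$ exploit the combinatorial and geometric setup of Section \ref{sec:construction}.

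The level-$k$ pieces inside a single parent $R_{k-1}^{(j')}$ project to the intervals $I_k^{(h,j)}$ of \eqref{eqn:Idefn}, which are separated by gaps of length $2^{-n_k+a_k}$ along $\ell_k$. Since $\pi_{\ell_k}(Q)$ is an interval of length at most $2^{-q}$, iterating this spacing estimate across the levels where $Q$ actually spans several children of one parent yields
\[
N_k(Q)\;\le\;C\,\prod_{i=k^{*}}^{k}2^{\,n_i-a_i-q},
\]
where $k^{*}=k^{*}(q)$ is the smallest index with $n_{k^{*}}-a_{k^{*}}>q$ and $C$ absorbs the uniform angular constants supplied by Lemma \ref{lem:wkcomputation}.

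The choice of $k$ is dictated by Lemma \ref{lem:mkcomputation}. Taking $k=k^{*}+d-1$ makes the sum $\sum_{i=k^{*}}^{k}(n_i-a_i)-m_k$ collapse, via identity (ii) at parameter $k^{*}-1$, to $a_{k^{*}-1}+\varepsilon_{k^{*}-1}n_{k^{*}-1}-d$; taking $k=k^{*}+d-2$ makes it collapse, via identity (iii) at parameter $k^{*}-1$, to $a_{k^{*}-1}+(-1+\varepsilon'_{k^{*}-1})n_{k^{*}-1}-d$. Subsections \ref{subsec:MDPcase1} and \ref{subsec:MDPcase2} handle these two choices respectively, together covering all sufficiently large $q$. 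Substituting either identity into $\mu(Q)\le N_k(Q)\cdot 2^{-m_k}$ yields
\[
\log_2\mu(Q)\;\le\;-q\,(d-1+t)+\bigl[a_{k^{*}-1}-(1-t)\,q\bigr]+o\bigl(n_{k^{*}-1}\bigr)+O(k^{*}).
\]

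For part (i), hypothesis \eqref{eq:MDPconvergencecondition} supplies $a_{k^{*}-1}/n_{k^{*}-1}\le 1-t-1/(k^{*}-1)$, while the definition of $k^{*}$ gives $q\ge(t+1/(k^{*}-1))\,n_{k^{*}-1}$; together these force the bracketed term to be at most $-n_{k^{*}-1}/(k^{*}-1)$. Because $n_k$ is free to grow arbitrarily fast, and by Lemma \ref{lem:mkcomputation} the sequences $\varepsilon_k,\varepsilon'_k$ can be made to decay as fast as we want, the total error is strictly less than $-2\log_2 q$ for all sufficiently large $q$; together with Lemma \ref{lem:poscapacity} this gives option (1) of Theorem \ref{thm:linesubspaces}. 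For part (ii) we do not assume \eqref{eq:MDPconvergencecondition}, but the strict inequality $s<d-1+t$ provides additive slack $(d-1+t-s)q$, which swallows both the $o(n_{k^{*}-1})$ and the $O(k^{*})$ errors once $q$ is large enough.

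The main obstacle is the counting estimate on $N_k(Q)$: a naive iteration picks up an additional $2^{k^{*}}$ loss from levels at which $Q$ could straddle a child boundary, and controlling this requires using the linear independence of any $d$ consecutive $\ell_i$, which is exactly why this property was built into the enumeration at the start of Section \ref{sec:construction}. Once the counting is sharp, the case analysis supplied by identities (ii) and (iii) of Lemma \ref{lem:mkcomputation} mechanically produces the required bound.
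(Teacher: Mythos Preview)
Your overall plan mirrors the paper's: bound $\mu(Q)\le N_k(Q)\cdot 2^{-m_k}$ at a level $k$ a fixed distance past $k^*$, use the spacing $2^{-n_i+a_i}$ to count children, and collapse the resulting sum via identities (ii) and (iii) of Lemma~\ref{lem:mkcomputation}. The counting estimate and the choice of levels $k=k^*+d-1$ and $k=k^*+d-2$ are exactly right, and they do correspond to the paper's two cases.

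There is, however, a genuine gap in the step after the substitution. You assert that \emph{either} identity yields the single displayed formula with bracket $[a_{k^*-1}-(1-t)q]$, and then you bound this bracket using only $q\ge(t+1/(k^*-1))\,n_{k^*-1}$. Neither of these is correct. Identity (iii) at $k^*-1$ actually produces the bracket $[a_{k^*-1}-n_{k^*-1}+tq]$, not $[a_{k^*-1}-(1-t)q]$; the two agree only if $q=n_{k^*-1}$. And your lower bound $q\ge(t+1/(k^*-1))\,n_{k^*-1}$, combined with \eqref{eq:MDPconvergencecondition}, gives only
\[
a_{k^*-1}-(1-t)q\;\le\;\bigl[(1-t)^2-(2-t)/(k^*-1)\bigr]\,n_{k^*-1},
\]
which is \emph{positive} for all large $k^*$ when $t<1$; it is certainly not $\le -n_{k^*-1}/(k^*-1)$ as you claim, so part (i) does not follow. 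The paper avoids this by making the case split explicit: in Case~1 one has the stronger bound $q>n_{k^*-1}$, which forces $[a_{k^*-1}-(1-t)q]<a_{k^*-1}-(1-t)n_{k^*-1}\le -n_{k^*-1}/(k^*-1)$; in Case~2 one has $q\le n_{k^*-1}$ and uses the \emph{different} bracket $[a_{k^*-1}-n_{k^*-1}+tq]\le a_{k^*-1}-(1-t)n_{k^*-1}$. Without articulating which range of $q$ goes with which identity, and without using the sharper case-specific inequality on $q$, the argument collapses. (Incidentally, the counting does not rely on the linear independence of $d$ consecutive $\ell_i$; the product bound needs only the spacing of the intervals $I_k^{(h,j)}$ along each $\ell_{k+j}$ and the fact that $Q$ meets at most one or two level-$(k^*-1)$ pieces.)
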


To prove this claim, we consider two cases which together cover all possible values of $q$: namely, either $2^{-n_{k+1}+a_{k+1}}\leq 2^{-q}<2^{-n_{k}}$, or $2^{-n_{k}}\leq 2^{-q}<2^{-n_{k}+a_{k}}$ for some uniquely chosen index $k$.
It is clear that these cover all possible cases, because property \ref{item:furnishes_MDP_cases} of Lemma \ref{lem:the_BETTER_sequences_exist} implies that $-n_{k+1}+a_{k+1}\leq -n_{k} < -n_{k}+a_{k}$.

\subsection{Case 1: $2^{-n_{k+1}+a_{k+1}}\leq 2^{-q}<2^{-n_{k}}$}\label{subsec:MDPcase1}

Here, the diameter of $Q$ is greater than the length of the shortest translation vector between two $\smash{R_{k+1}^{(i)}}$, but small enough that a translated copy fits inside the containing $\smash{R_{k}^{(j')}}$.
This is illustrated in Figure \ref{fig:case1}.
\begin{figure}[!ht]
  \centering
  \includegraphics[width=0.8\textwidth]{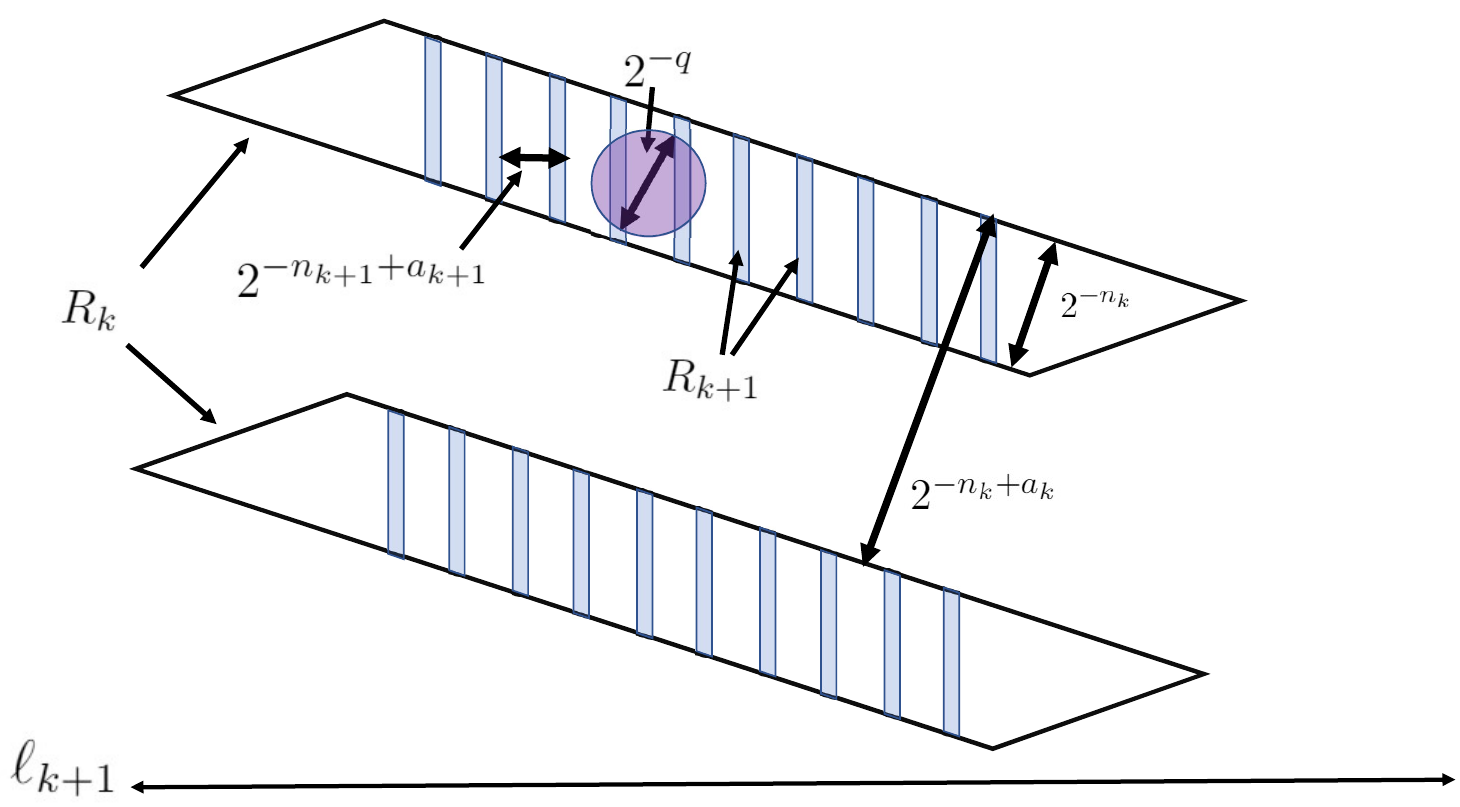}
  \caption{Positioning of $Q$ in Case 1}
  \label{fig:case1}
\end{figure}
In this case, we first obtain the following basic estimate,
\begin{align*}
    \mu(Q)
    &\leq
    \mu(R_{k+d}^{(i)})
    \cdot
    \#\set{R_{k+d}^{(i)}:R_{k+d}^{(i)}\cap Q\neq\emptyset}.
\end{align*}
By our construction the mass of each $R_{k+d}^{(i)}$ is $2^{-m_{k+d}}$, and the second factor can be bounded as follows,
\begin{align*}
\#\set{R_{k+d}^{(i)}:R_{k+d}^{(i)}\cap Q\neq\emptyset}
&\leq\prod_{j=1}^d\max_{i}\Bigg(\#\set{R_{k+j}^{(i')}\in R_{k+j-1}^{(i)}:R_{k+j}^{(i')}\cap Q\neq\emptyset}\Bigg)\\
&\leq\prod_{j=1}^{d}\left\lceil
\frac{2^{-q}}{2^{-n_{k+j}+a_{k+j}}}\right\rceil\\
&\leq\prod_{j=1}^{d}
\frac{2^{-q+1}}{2^{-n_{k+j}+a_{k+j}}},
\end{align*}
where the first estimate holds since $Q$ can intersect
only one $R_k^{(i)}$ and the second estimate holds because the shortest translation vector between any two $R_{k+j}^{(i)}$ has length $2^{-n_{k+j}+a_{k+j}}$ by our construction and all such sets must intersect $Q$. The final estimate holds by the fact that $\lceil x \rceil \le 2x$ for any $x \ge 1$, which holds here by using the case hypothesis and that the sequence $n_k-a_k$ is non-decreasing by property \ref{item:furnishes_MDP_cases} of Lemma \ref{lem:the_BETTER_sequences_exist}.

Hence, $(\ref{eq:toshowforlemma})$ is implied by the following,
\[-m_{k+d}+\sum_{j=1}^d(n_{k+j}-a_{k+j})+d\leq q(d-s)-2\log_2 q.\]
By Lemma \ref{lem:mkcomputation} ii) this is equivalent to 
\[a_k+\e_kn_k\leq q(d-s)-2\log_2 q.\]
Because $d>s$ both in (i) and (ii), there exists $K_1$ so that for $q>K_1$ we have $q(d-s)-2\log_2q$ is monotonically increasing in $q$.
Since $n_k<q$ by the hypothesis of this case, we find that it is enough to prove
\begin{equation}\label{eq:finaltoshowMDPcase1}
    s\leq d-\frac{a_k}{n_k}-\frac{2\log_2n_k}{n_k}-\e_k.
\end{equation}

To check (i) observe that 
if we assume (\ref{eq:MDPconvergencecondition})
then (\ref{eq:finaltoshowMDPcase1}) 
for $s=d-1+t$ 
is implied by 
$2\log_2 n_k / n_k + \varepsilon_k \le 1/k$,
and this last inequality holds by property \ref{item:case1_MDP}, in conjunction with the estimate $|\e_k|<1/2k$.
To check (ii) note that the right-hand side of \eqref{eq:finaltoshowMDPcase1} tends to $d-1+t$, so \eqref{eq:finaltoshowMDPcase1} indeed holds
for large enough $k$
for any $s\in[d-1,d-1+t)$.

\subsection{Case 2: $2^{-n_{k}}\leq 2^{-q}<2^{-n_{k}+a_{k}}$}\label{subsec:MDPcase2}

Here, the diameter of $Q$ is greater than the width of an $\smash{R_{k}^{(j')}}$ projected onto $\ell_{k}$, but smaller than the distance of the shortest translation vector between two $\smash{R_{k}^{(j')}}$.
This is illustrated in Figure \ref{fig:case2}.
\begin{figure}[!ht]
  \centering
  \includegraphics[width=0.8\textwidth]{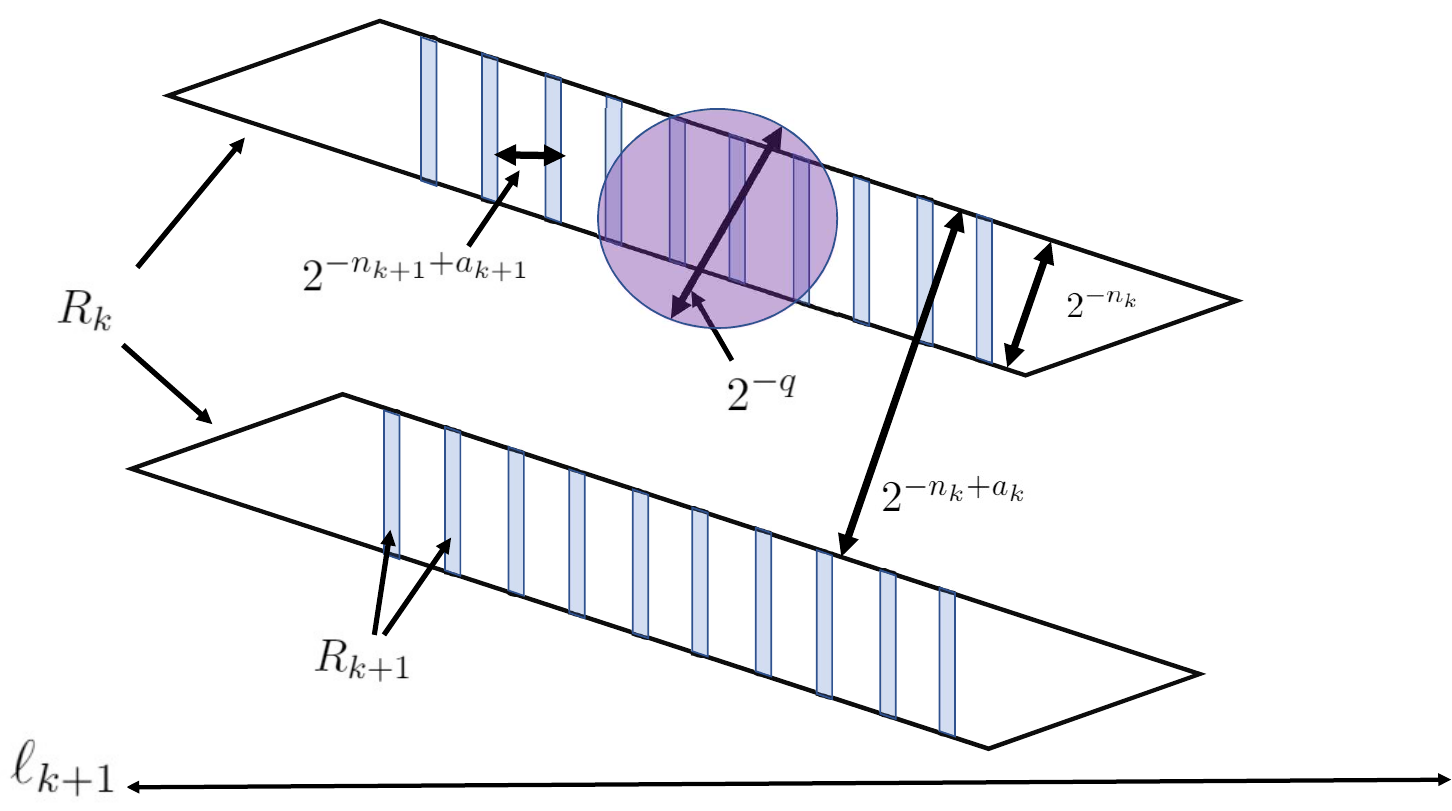}
  \caption{Positioning of $Q$ in Case 2}
  \label{fig:case2}
\end{figure}
Accordingly, this time we start with a similar basic estimate,
\begin{align*}
    \mu(Q)
    &\leq
    \mu(R_{k+d-1}^{(i)})
    \cdot
    \#\set{R_{k+d-1}^{(i)}:R_{k+d-1}^{(i)}\cap Q\neq\emptyset}.
\end{align*}
The number of $R_{k+d-1}^{(i)}$ which intersect $Q$ is bounded similarly, but this time we only take the product over the first $d-1$ terms, 
and we use that $Q$ can intersect at most two $R_k^{(i)}$.
\begin{align*}
\#\set{R_{k+d-1}^{(i)}:R_{k+d-1}^{(i)}\cap Q\neq\emptyset}
&\leq 2 \prod_{j=1}^{d-1}\max_{i}\Bigg(\#\set{R_{k+j}^{(i')}\in R_{k+j-1}^{(i)}:R^{(i')}_{k+j}\cap  Q\neq \emptyset}\Bigg)\\
&\leq 2 \prod_{j=1}^{d-1}
\left\lceil\frac{2^{-q}}{2^{-n_{k+j}+a_{k+j}}}\right\rceil\\
&\leq 2 \prod_{j=1}^{d-1}\frac{2^{-q+1}}{2^{-n_{k+j}+a_{k+j}}} \\
&= 2^d \prod_{j=1}^{d-1}\frac{2^{-q}}{2^{-n_{k+j}+a_{k+j}}}.
\end{align*}
Hence \eqref{eq:toshowforlemma}
is implied by the following,
\[-m_{k+d-1}+\sum_{j=1}^{d-1}(n_{k+j}-a_{k+j})+d
\leq q(d-s-1)-2\log_2q.\] 
By Lemma \ref{lem:mkcomputation} iii), it suffices to show
\[a_k+(-1+\e_k')n_k\leq q(d-s-1)-2\log_2q.\]
Notice that $d-s-1\leq 0$, so by the hypothesis of this case $q\leq n_k$, it suffices to show 
\begin{equation}\label{eq:toshowMDPcase2}
    s\leq d-\frac{a_k}{n_k}-\frac{2\log_2n_k}{n_k}-\e_k'.
\end{equation}
Note that this estimate is nearly identical to (\ref{eq:finaltoshowMDPcase1}), hence the remainder of this argument follows \ita{mutatis mutandis}.


\section*{Acknowledgements}
The authors would like to thank Rich\'ard Balka, Korn\'elia H\'era, Andr\'as Math\'e, and Pertti Mattila for their suggestions and remarks, as well as the referees for their detailed suggestions regarding the exposition.
They would also like to thank the Budapest Semesters in Mathematics program for providing the framework under which this research was conducted.





\begin{thebibliography}{1}

\bibitem{balka_hausdorff_2016}
    R. Balka, U. Darji, and M. Elekes, \emph{ Hausdorff and packing dimension of fibers and graphs of prevalent continuous maps}, Adv. Math. \textbf{293} (2016), 221-274.
    
\bibitem{chang_small_2018}
    A. Chang, M. Cs\"ornyei, K. H\'era, and T. Keleti, \emph{Small unions of affine subspaces and skeletons via Baire category}, Adv. Math. \textbf{328} (2019), 801-821.
    
\bibitem{Davies}
    R. O. Davies, \emph{Some remarks on the Kakeya problem}, Proc. Cambridge Phil. Soc. \textbf{69} (1971), 417-421.
    
\bibitem{Eiderman_Larsen}
    V. Eiderman and M. Larsen, \emph{A ``rare'' plane set with Hausdorff dimension 2}, (2019), arXiv:1904.09034.
    
\bibitem{falconer_fractal}
    K. Falconer, \emph{Fractal Geometry: Mathematical Foundations and Applications, Second Edition}, Wiley, 2003.
    
\bibitem{hera_hausdorff_2018}
    K. H\'era, \emph{Hausdorff dimension of {Furstenberg}-type sets associated to families of affine subspaces}, 
    Ann. Acad. Sci. Fenn. Math. \textbf{44} (2019), 903--923.
    
\bibitem{hera_hausdorff_2017}
    K. H\'era, T. Keleti, and A. M\'ath\'e, \emph{Hausdorff dimension of unions of affine subspaces and of Furstenberg-type sets}, 
    J. Fractal Geom. \textbf{6} (2019), 263--284. 
    
\bibitem{mattila_fractal}
    P. Mattila, \emph{Geometry of Sets \& Measures in Euclidean Space},
    Cambridge University Press, 1995.

\bibitem{Oberlin2014}
    D. Oberlin, \emph{Exceptional sets of projections, unions of $k$-planes and associated transforms}, Israel J. Math. \textbf{202} (2014), no. 1, 331-342.
    
\bibitem{KW}
    K. Weierstrass, \emph{\"Uber continuirliche Functionen eines reellen Arguments, die f\"ur keinen Werth des letzeren einen bestimmten Differential quotienten besitzen}.
    Mathematische Werke II, 71-74.
    K\"onigl. Akad. Wiss., 1872.
\end{thebibliography}
\end{document}